\definecolor{myurlcolor}{rgb}{0,0,0.5}
\newcommand{\dashbk}{-}
\newcommand{\hyph}{\mbox{-}}
\newcommand{\ucontents}[2]{\addcontentsline{toc}{#1}{\numberline{}{#2}}}
\newcommand{\mb}[1]{\mathbf{#1}}
\newcommand{\mr}[1]{\mathrm{#1}}
\newcommand{\cat}[1]{\mathscr{#1}}
\newcommand{\fcat}[1]{\mb{#1}}
\newcommand{\such}{\:|\:}
\newcommand{\without}{\setminus}
\newcommand{\op}{\mr{op}}
\newcommand{\id}{\mr{id}}
\newcommand{\Alg}{\fcat{Alg}}
\newcommand{\Cat}{\fcat{Cat}}
\newcommand{\Set}{\fcat{Set}}
\newcommand{\integers}{\mathbb{Z}}
\newcommand{\oppairu}{\rightleftarrows}
\newcommand{\reals}{\mathbb{R}}
\newcommand{\rationals}{\mathbb{Q}}
\newcommand{\latin}[1]{#1}	
\newcommand{\demph}[1]{\textbf{\textup{#1}}}
\newcommand{\done}{\hfill\ensuremath{\Box}}
\newenvironment{prooflike}[1]{\begin{trivlist}\item\textbf{#1}\ }
{\end{trivlist}}
\newenvironment{proof}{\begin{prooflike}{Proof}}{\end{prooflike}}
\newcommand{\scat}[1]{\mathbf{#1}}
\newcommand{\iso}{\cong}
\newcommand{\nat}{\mathbb{N}}	
\newcommand{\of}{\,\raisebox{0.08ex}{\ensuremath{\scriptstyle\circ}}\,}
\newcommand{\sub}{\subseteq}
\newcommand{\cell}[4]{\put(#1,#2){\makebox(0,0)[#3]{#4}}}
\newcommand{\One}{\mathbf{1}}
\newcommand{\Two}{\mathbf{2}}
\newcommand{\toby}[1]{\stackrel{#1}{\to}}
\newcommand{\FinSet}{\fcat{FinSet}}
\DeclareMathOperator{\adj}{adj}
\newcommand{\incl}{\hookrightarrow}
\newcommand{\cln}{\colon}
\newcommand{\card}[1]{\# #1}
\newcommand{\Poset}{\fcat{Poset}}
\renewcommand{\implies}{\Rightarrow}
\newcommand{\iaf}[2]{#1 #2}
\newcommand{\iac}[2]{#1_\mathrm{c} #2}
\newcommand{\iap}[2]{#1_\mathrm{p} #2}
\newcommand{\codisc}[1]{C #1}
\newcommand{\poset}[1]{P #1}
\newcommand{\A}{\scat{A}}
\newcommand{\B}{\scat{B}}
\newcommand{\V}{\scat{V}}
\newcommand{\arr}[1]{#1_1}
\newcommand{\obc}[1]{#1_0}
\newcommand{\Catboff}{{\fcat{Cat}_!}}
\newcommand{\Catulf}{{\fcat{Cat}^*}}
\newcommand{\T}{\mathbf{T}}
\newcommand{\E}{\cat{E}}
\newcommand{\obdot}{\cdot}
\newcommand{\pset}{\mathscr{P}}
\newcommand{\detp}{{\textstyle\det^+}}
\newcommand{\detm}{{\textstyle\det^-}}
\newcommand{\adjp}{\adj^+}
\newcommand{\adjm}{\adj^-}
\newcommand{\divides}{\mid}
\newcommand{\Dinj}{\scat{D}_{\text{inj}}}
\newcommand{\cd}[1]{|#1|}
\newcommand{\cde}{\cd{\cdot}}
\newcommand{\conv}{*}
\newcommand{\gr}[1]{|#1|}
\newcommand{\Lau}[1]{(\!(#1)\!)}
\newcommand{\oppair}[4]{%
\xymatrix@1{%
#1 \ar@<.5ex>[r]^{#3} &{#2}\ar@<.5ex>[l]^{#4}%
}}
\newcommand{\patch}[3]{{[#2, #3]_{#1}}}
\newtheorem{thm}{Theorem}[section]
\newtheorem{propn}[thm]{Proposition}
\newtheorem{lemma}[thm]{Lemma}
\newtheorem{cor}[thm]{Corollary}
\newtheorem{predefn}[thm]{Definition}
\newenvironment{defn}{\begin{predefn}\upshape}{\end{predefn}}
\newtheorem{preexample}[thm]{Example}
\newenvironment{example}{\begin{preexample}\upshape}{\end{preexample}}
\newtheorem{preexamples}[thm]{Examples}
\newenvironment{examples}{\begin{preexamples}\upshape}{\end{preexamples}}
\author{Tom Leinster%
\thanks{School of Mathematics, University of Edinburgh, Edinburgh EH9 3JZ,
UK; Tom.Leinster@ed.ac.uk.  
Supported by an EPSRC Advanced Research Fellowship.}}
\title{\vspace*{-2em}Notions of M\"obius inversion}
\date{}
\begin{document}

\sloppy
\maketitle

\begin{abstract}
M\"obius inversion, originally a tool in number theory, was generalized to
posets for use in group theory and combinatorics.  It was later generalized
to categories in two different ways, both of which are useful.  We provide a
unifying abstract framework.  This allows us to compare and contrast the two
theories of M\"obius inversion for categories, and advance each of them.
Among several side benefits is an improved understanding of the following
fact: the Euler characteristic of the classifying space of a (suitably finite)
category depends only on its underlying graph.
\end{abstract}

\tableofcontents\ 

\section*{Introduction}
\ucontents{section}{Introduction}

The history of M\"obius inversion begins with August Ferdinand M\"obius
(1790--1868), the basic aspects of whose work on this can be described in
modern terms as follows.  Consider sequences $\alpha(1), \alpha(2), \ldots$ of
complex numbers.  Any two sequences $\alpha$, $\beta$ have a convolution
product $\alpha \conv \beta$, defined by
\[
(\alpha\conv\beta)(n) = \sum_{k, m\cln km=n} \alpha(k) \beta(m).
\]
This product has a unit, and the constant sequence $\zeta = (1, 1, \ldots)$
has a convolution inverse: the classical M\"obius function $\mu$, given by a
well-known formula involving prime factorizations.  It has many uses in
elementary and not-so-elementary number theory.  For example, every 
sequence $\alpha$ determines a formal Dirichlet series $\sum_{n =
1}^\infty \alpha(n)/n^s$, where $s$ is a formal variable.  Convolution of
sequences corresponds to multiplication of Dirichlet series.  The constant
sequence $\zeta$ corresponds to the Riemann zeta function, and the
relationship between $\zeta$ and $\mu$ can be expressed as
\[
\sum_{n = 1}^\infty \frac{1}{n^s} 
=
1
\bigg/
\sum_{n = 1}^\infty \frac{\mu(n)}{n^s}.
\]

In the mid-twentieth century, it was realized that M\"obius inversion could
usefully be defined for general partially ordered sets, the original case
being the set of positive integers ordered by divisibility.  This insight
is usually associated with the name of Gian-Carlo Rota~\cite{Rota}.  Although
Rota was not (as he made clear) the first to generalize M\"obius inversion to
posets, he was responsible for harnessing its power to solve problems in
enumerative combinatorics.  

Rota's theory was subsequently generalized by multiple people in multiple
directions, but two particularly concern us here.  Both are theories of
M\"obius inversion for categories (Fig.~\ref{fig:history}).

\begin{figure}
\centering
\setlength{\unitlength}{1mm}
\begin{picture}(122,50)(-60,8)
\thicklines
\cell{0}{12}{c}{\includegraphics[width=25\unitlength]{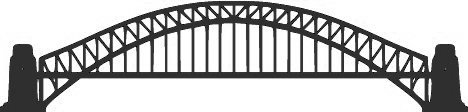}}
\cell{0}{55}{c}{Number-theoretic M\"obius inversion}
\cell{0}{50}{c}{(M\"obius 1832)}
\put(0,46){\vector(0,-1){7}}
\cell{0}{35}{c}{M\"obius inversion for posets}
\cell{0}{30}{c}{(Rota 1964, et al.)}
\put(-15,26){\vector(-1,-1){7}}
\cell{-43}{15}{c}{Fine M\"obius inversion for categories}
\cell{-43}{10}{c}{(Leroux et al.\ 1975, '80; Haigh 1980)}
\put(15,26){\vector(1,-1){7}}
\cell{43}{15}{c}{Coarse M\"obius inversion for categories}
\cell{43}{10}{c}{(Haigh 1980; Leinster 2008)}
\end{picture}
\caption{Simplified history of M\"obius inversion.  This paper builds a
bridge between the two notions of M\"obius inversion for categories.  Notably
missing from the diagram are the finite difference calculus and the theories
of M\"obius inversion developed by Cartier and Foata~\cite{CaFo},
D\"ur~\cite{Duer} and L\"uck~\cite{Luec}.}
\label{fig:history}
\end{figure}

The first was developed independently by Pierre Leroux and collaborators and
by John Haigh.  (Leroux published a short announcement in 1975~\cite{Lero}.
The full account, joint with Content and Lemay, appeared in 1980~\cite{CLL},
as did Haigh's paper~\cite{Haig}.)  The second was also introduced by Haigh
(in Section~3 of~\cite{Haig}), in just a dozen lines of text.  It was
developed more fully by the author~\cite{ECC} as part of the theory of
Euler characteristic of categories.

A comparison of the two theories would be hopelessly confusing if both were
referred to as `M\"obius inversion'.  We therefore introduce new terminology.
The first type of M\"obius inversion is called `fine', and the second
`coarse'.  These same adjectives are applied systematically throughout; for
example, the finiteness condition used in the fine theory is renamed `fine
finiteness', and its coarse counterpart `coarse finiteness'.  This makes
various relationships clear.  The names are apt: the fine M\"obius function of
a category is a more refined invariant, more sensitive to the category's
structure than the coarse one.  And there are far more categories for which
the coarse M\"obius function is well-defined than the fine one: it is like a
weed that grows almost anywhere, compared to a fine but delicate flower.  See
Examples~\ref{eg:group-fine} and~\ref{eg:monoid-coarse}, and
Theorem~\ref{thm:comparison-fin}.

This paper proves results connecting the two theories, points out essential
differences between them, and advances each one further.  But more
importantly, it provides a single abstract setting in which all of this takes
place.  As we shall see, the two theories, together with a third intermediate
one, arise from the inclusions of categories
\[
\One \incl \Two \incl \Set
\]
in a uniform manner (Fig.~2).

We begin with a review of fine and coarse M\"obius inversion for categories,
introducing the new terminology (Section~\ref{sec:fac}).  The basic theorem
connecting them is stated.  We see that the fine and coarse theories can help
each other: for instance, Corollaries~\ref{cor:Menni},
\ref{cor:Menni-Euler} and~\ref{cor:Menni-pf} are all stated purely in terms of
fine M\"obius inversion, but proved using the coarse theory.

We also explore in Section~\ref{sec:fac} the following curious phenomenon.
Every small category gives rise to a topological space, its classifying space
or geometric realization.  It is a fact that (under finiteness assumptions)
the Euler characteristic of that space is independent of the composition in
the category.  The theory of coarse M\"obius inversion sheds light on this.

The abstract framework is introduced in Section~\ref{sec:fun}.  The key is the
covariant functoriality of the incidence algebra construction.  As the
framework is developed, a third level naturally emerges, between coarse and
fine.  This allows the coarse theory, previously confined for the most part to
finite categories, to be extended to infinite categories
(Section~\ref{sec:inf}).  There we generalize one of the main theorems of
Rota's original paper~\cite{Rota}.

Coarse M\"obius inversion also makes sense for enriched categories
(Section~\ref{sec:enr}).  This fact has already been
exploited in investigations of geometric measure in metric spaces, as will be
explained. 

The remaining sections are contributions to the fine theory.  The incidence
algebra construction is functorial in both the covariant and contravariant
senses, and in Section~\ref{sec:fun-re}, we prove a Beck--Chevalley theorem
enabling the two to be unified.  In Section~\ref{sec:McL}, we prove a
new characterization of the `M\"obius categories' of Leroux.

There are two appendices.  Setting up the coarse theory for infinite
categories requires a nontrivial result on inverse matrices, proved in
Appendix~\ref{app:zeros}.  Finally, Appendix~\ref{app:pbh} creates an abstract
home for the notion of functor with unique lifting of factorizations,
important for M\"obius inversion.  The concept developed there,
`pullback-homomorphism', may also be of more general interest.

\paragraph*{Related work}  I will not attempt to survey the large body of work
on M\"obius inversion for posets; see~\cite{StanEC1} for a good overview.
Lawvere and Menni's paper~\cite{LaMe} contains further pointers to the
literature on fine M\"obius inversion for categories.  Coarse M\"obius
inversion is used in the theory of the Euler characteristic of a category,
which since the original paper~\cite{ECC} has been developed and applied by
Berger and Leinster~\cite{ECCSDS}, Fiore, L\"uck and Sauer~\cite{FLSECC,
FLSFOE}, Jacobsen and M{\o}ller~\cite{JaMoe}, and Noguchi~\cite{NoguECA,
NoguECC, NoguZFFS, NoguZFFM}.  Sections~\ref{sec:inf} and~\ref{sec:enr}
of the present work expand on points covered briefly in Sections~4 and~2,
respectively, of~\cite{ECC}.  We do not touch here on the theory of
M\"obius inversion developed by Cartier and Foata~\cite{CaFo} for use in
combinatorics, nor that of D\"ur~\cite{Duer} or L\"uck~\cite{Luec}.

Many of the finiteness conditions arising in M\"obius inversion for categories
were explored by Mitchell~\cite{Mitc}, as was the incidence algebra
construction.  The question of which finite directed graphs admit a category
structure, implicitly raised by Lemma~\ref{lemma:trans}, has recently been
answered by Allouch~\cite{AlloECAP, AlloECA}.

\paragraph*{Notation} Given a small category $\A$, we write $\obc{\A}$ for its
set of objects and $\arr{\A}$ for the set of all maps or
morphisms in $\A$.  We often write $a \in \A$ to mean $a \in
\obc{\A}$, and we write $\A(a, b)$ for the set of maps from $a$
to $b$.  Given a finite set $X$, we write $\card{X}$ for its
cardinality.

\paragraph*{Acknowledgements} I thank John Baez, Nathan Bowler, Joachim
Kock, Mat\'ias Menni, Mike Shulman, Todd Trimble and Russ Woodroofe for
useful and enlightening conversations.  I am also grateful for the comments
of the anonymous referee.

\section{Fine and coarse M\"obius inversion}
\label{sec:fac}

Here we review the two types of M\"obius inversion for categories,
introducing systematic new terminology.  We then state the basic result
connecting the fine and coarse theories.

A \demph{rig} (or semiring) is a ri\emph{n}g without \emph{n}egatives: a set
equipped with a commutative monoid structure $(+, 0)$ and a monoid structure
$(\cdot, 1)$, the latter distributing over the former.  We take rig to mean
\emph{commutative} rig: one whose multiplication is commutative.  Similarly,
\demph{ring} means commutative ring.  A rig is \demph{trivial} if it has only
one element.  For a natural number $n$ and a rig $k$, we often use $n$ to
denote the element $n\cdot 1 = 1 + \cdots + 1$ of $k$.

A \demph{module} over a rig $k$ is a commutative monoid acted on by $k$, in
the evident sense; \demph{algebras} over $k$ are defined similarly (and not
assumed to be commutative).  When $k$ is a ring, $k$-modules are the same
whether $k$ is regarded as a rig or as a ring.  The same goes for algebras.

\subsection*{Fine M\"obius inversion}

The convolution of the opening paragraph involved a sum, and that sum
is well-defined because each positive integer has only finitely many
factorizations into two parts.  Similarly, when developing M\"obius
inversion for categories, we need to impose the following finiteness
condition.  A category $\A$ is \demph{finely finite} if for each map $f\cln a
\to b$ in $\A$, there are only finitely many diagrams
\[
a \toby{g} c \toby{h} b
\]
in $\A$ whose composite is $f$.

Let $\A$ be a finely finite category and $k$ a rig.  The \demph{fine incidence
algebra} $\iaf{k}{\A}$ is the set of all functions $\arr{\A} \to k$, made into
a $k$-algebra as follows.  Its $k$-module structure is pointwise.  The
multiplication $\conv$ is given by
\[
(\alpha\conv\beta)(f)
=
\sum_{h g = f}
\alpha(g) \, \beta(h),
\]
where $\alpha, \beta \in \iaf{k}{\A}$ and $f \in \arr{\A}$.  (Fine finiteness
guarantees that the sum is finite.)  The multiplicative unit $\delta$ is given
by $\delta(1_a) = 1$ whenever $a \in \A$, and $\delta(f) = 0$ otherwise.

The fine incidence algebra has a special element: the
\demph{fine zeta function} $\zeta_\A$, defined by
\[
\zeta_\A(f) = 1
\]
for all $f \in \arr{\A}$.  We say that $\A$ has \demph{fine M\"obius
inversion} over $k$ if $\zeta_\A$ has a multiplicative inverse in
$\iaf{k}{\A}$, which is called the \demph{fine M\"obius function}
$\mu_\A = \zeta_\A^{-1} \in \iaf{k}{\A}$.  

These terms are all new; let us compare them with previous usage.  Where we
call a category finely finite, Leroux et al.~\cite{CLL} say that it `has
finite decompositions of degree 2'.  What we call the fine incidence algebra
and fine M\"obius function, they simply call the incidence algebra and
M\"obius function.  They also have a definition of `M\"obius category'.  Being
a M\"obius category is a stronger condition than having fine M\"obius
inversion.  The precise relationship is determined in Section~\ref{sec:McL},
but we will not need the concept of M\"obius category elsewhere.

Haigh~\cite{Haig} removes the possibility of infinite sums by a different
strategy: he imposes no finiteness conditions on $\A$, but considers only
those functions $\alpha\cln \arr{\A} \to k$ such that $\alpha(f) = 0$ for all
but finitely many maps $f$.  He calls the resulting algebra the `category
algebra'; it only has a multiplicative identity if $\A$ is finite.  He
calls a finite category $\A$ a `M\"obius category' if it has fine M\"obius
inversion, in conflict with the usage of Leroux et al.

Both Haigh and Leroux et al.\ take $k$ to be a ring, not a general rig.

\begin{example} \label{eg:poset-fine}
Let $A$ be a partially ordered set, viewed as a category.  It is finely finite
if and only if it is \demph{locally finite}: for all $a, b \in A$, the set $\{
c \in A \such a \leq c \leq b \}$ is finite.  (Rota's theory proceeds on this
assumption.) The fine incidence algebra is the set of functions
\[
\{ (a, b) \in A \times A \such a \leq b \}
\to 
k,
\]
and the fine M\"obius function $\mu_A$, if it exists, is characterized by the
equations
\[
\sum_{c\cln a \leq c \leq b} \mu_A(a, c) 
=
\sum_{c\cln a \leq c \leq b} \mu_A(c, b)
=
\begin{cases}
1       &\text{if } a = b       \\
0       &\text{otherwise}
\end{cases}
\]
($a, b \in A$).  Hall~\cite{Hall} showed that when $k$ is a ring, the
M\"obius function exists and is given by
\[
\mu_A(a, b)
=
\sum_{n \in \nat}
(-1)^n
\cdot
\card{\{ \text{chains } a = a_0 < \cdots < a_n = b \}}.
\]
For example, if $A$ is the poset of positive integers ordered by divisibility,
and $k = \integers$, then $\mu_A(a, b) = \mu(b/a)$, where the $\mu$ on the
right-hand side is the classical M\"obius function.
\end{example}

\begin{example} \label{eg:group-fine}
A group, viewed as a one-object category, is finely finite if and only if
it is finite.  The fine incidence algebra $\iaf{k}{G}$ of a finite group $G$
is its group algebra.  No group has fine M\"obius inversion, except when it or
$k$ is trivial.
\end{example}

\subsection*{Coarse M\"obius inversion}

A category is \demph{finite}---or \demph{coarsely finite}, for emphasis---if
it has only finitely many objects and arrows.  For now, coarse M\"obius
inversion will be defined only for finite categories.  We will see how to
relax this assumption in Section~\ref{sec:inf}.

Let $\A$ be a finite category and $k$ a rig.  The \demph{coarse incidence
algebra} $\iac{k}{\A}$ is the set of all functions $\obc{\A} \times \obc{\A}
\to k$, made into a $k$-algebra as follows.  Its $k$-module structure is
pointwise.  The multiplication $\conv$ is given by
\[
(\alpha\conv\beta)(a, b)
=
\sum_{c \in \A}
\alpha(a, c) \, \beta(c, b)
\]
($\alpha, \beta \in \iac{k}{\A}, a, b \in \A$).  The multiplicative unit is
the Kronecker $\delta$, defined by $\delta(a, b) = 1$ if $a = b$ and
$\delta(a, b) = 0$ otherwise.  If a total order is chosen on the $n$ objects
of $\A$, then $\iac{k}{\A}$ is just the algebra of $n\times n$ matrices over
$k$.

The coarse incidence algebra has a special element: the \demph{coarse zeta
function} $\zeta_\A$, defined for $a, b \in \A$ by 
\[
\zeta_\A(a, b) 
=
\card{(\A(a, b))}
\in k.
\]
We say that $\A$ has \demph{coarse M\"obius inversion} over $k$ if $\zeta_\A$
has a multiplicative inverse in $\iac{k}{\A}$.  The \demph{coarse M\"obius
function} is then $\mu_\A = \zeta_\A^{-1} \in \iac{k}{\A}$.

In \cite{ECC}, the algebra $\iac{k}{\A}$ is only considered in the case $k =
\rationals$.  What we call coarse M\"obius inversion and the coarse M\"obius
function here are simply called M\"obius inversion and the M\"obius function
there.  The same is true in \cite{ECCSDS} and \cite{MMS}.

\begin{example} \label{eg:poset-coarse}
Let $A$ be a finite partially ordered set.  The coarse incidence algebra is
the set of functions $A \times A \to k$.  It contains the fine incidence
algebra as a subalgebra, consisting of those $\alpha\cln A \times A \to k$ such
that $\alpha(a, b) = 0$ whenever $a \not\leq b$.  The fine and coarse zeta
functions, viewed as elements of the coarse incidence algebra, are equal.
Hence when $A$ has fine M\"obius inversion (e.g.\ when $k$ is a ring), the
fine and coarse M\"obius functions are also equal.
\end{example}

No confusion should be caused by writing $\zeta_\A$ for both the fine and
coarse zeta functions.  When we write `$\zeta_\A(f)$', the $\zeta_\A$ in
question must be the fine one; when we write `$\zeta_\A(a, b)$', it must be
the coarse one.  \latin{A priori} there could be an ambiguity when $\A$ is a
poset, since there we might use $(a, b)$ to denote the unique map $a \to b$.
But the previous example shows that in that case, the two meanings of
$\zeta_\A(a, b)$ agree.  The same goes for the fine and coarse M\"obius
functions $\mu_\A$.  Moreover, when $\A$ is understood, we write them as just
$\zeta$ and $\mu$.

\begin{example} \label{eg:monoid-coarse}
Let $M$ be a finite monoid.  Then $\iac{k}{M} = k$, and $\zeta =
\card{M}\in k$.  So, for instance, if $k$ is a field of characteristic $0$
then every finite monoid has coarse M\"obius inversion over $k$.  Contrast
Example~\ref{eg:group-fine}.
\end{example}

A category with coarse M\"obius inversion over a nontrivial rig must be
\demph{skeletal}, that is, isomorphic objects must be equal.  (For if not, the
matrix $\zeta$ would have two identical rows.)  But since every category is
equivalent to some skeletal category, this is not a serious restriction.
Large classes of finite skeletal categories have coarse M\"obius inversion
over $\rationals$: all posets, groupoids and monoids, all categories 
containing no nontrivial idempotents, and all categories admitting an epi-mono
factorization system.  See~\cite{ECC} for details.

The \demph{Euler characteristic} of a finite category $\A$ with coarse
M\"obius inversion is $\chi(\A) = \sum_{a, b \in \A} \mu_\A(a, b)$.  (This can
be taken as a definition, although in fact Euler characteristic can be defined
under weaker hypotheses~\cite{ECC}.)  The name is largely justified by the
following fact.  Let $\A$ be a finite skeletal category containing no
nontrivial endomorphisms.  Write $\gr{N\A}$ for its classifying space, that
is, the geometric realization of its simplicial nerve $N\A$.  Proposition~2.11
of~\cite{ECC} states that $\chi(\A) = \chi(\gr{N\A})$.  Further results
in~\cite{ECC} relate the Euler characteristic of categories to other
invariants of size: the Euler characteristics of graphs, posets and orbifolds,
the cardinality of sets, and the Baez--Dolan cardinality of
groupoids~\cite{BaDoFSF}.

The coarse M\"obius function of a category does not depend on its composition,
just its underlying directed graph.  The same is therefore true of Euler
characteristic.  Of course, every nontrivial invariant throws away \emph{some}
information, but to throw away the composition of a category might be thought
extravagant.

Nevertheless, there is an important precedent.  Consider homotopically tame
spaces---say, finite CW-complexes.  Any such space $X$ can be built up from a
stock of points, intervals, disks, etc., by gluing them together, and it
hardly needs saying that the topology of $X$ depends entirely on \emph{how}
they are glued together.  But the Euler characteristic does not.
Topologically important as Euler characteristic is, it is independent of
gluing.  

The result on classifying spaces implies:
\begin{propn}   \label{propn:cs-indt}
Let $\A$ and $\A'$ be finite skeletal categories containing no nontrivial
endomorphisms.  If they have the same underlying directed graph then
$\chi(\gr{N\A}) = \chi(\gr{N\A'})$.%
\done
\end{propn}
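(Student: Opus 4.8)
The plan is to read Proposition~\ref{propn:cs-indt} as a direct logical consequence of two facts already assembled in the text: first, the result cited as Proposition~2.11 of~\cite{ECC}, namely that for a finite skeletal category $\A$ containing no nontrivial endomorphisms one has $\chi(\A) = \chi(\gr{N\A})$; and second, the stated principle that the coarse M\"obius function of a finite category depends only on its underlying directed graph, and hence so does its Euler characteristic $\chi(\A) = \sum_{a, b \in \A} \mu_\A(a, b)$. The proposition is then obtained simply by chaining these together, so the proof is essentially a two-line deduction rather than a construction.

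Concretely, I would argue as follows. Let $\A$ and $\A'$ be finite skeletal categories with no nontrivial endomorphisms, sharing the same underlying directed graph. Since each satisfies the hypotheses of the classifying-space result, I have $\chi(\gr{N\A}) = \chi(\A)$ and $\chi(\gr{N\A'}) = \chi(\A')$. Now $\chi(\A)$ is computed from the coarse M\"obius function via $\chi(\A) = \sum_{a, b \in \A} \mu_\A(a, b)$, and the text asserts that $\mu_\A$ depends only on the underlying graph. Because $\A$ and $\A'$ have the same graph, they have identical object sets and identical coarse zeta functions (the zeta entry $\zeta_\A(a,b) = \card{\A(a,b)}$ is just the number of edges $a \to b$ in the graph), hence the same $\mu_\A = \zeta_\A^{-1}$, and therefore $\chi(\A) = \chi(\A')$. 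Combining the three equalities gives $\chi(\gr{N\A}) = \chi(\A) = \chi(\A') = \chi(\gr{N\A'})$, as required.

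The genuinely substantive content lives entirely in the two inputs, not in the proposition itself. The real obstacle—already discharged by the surrounding theory—is justifying that $\mu_\A$ is a graph invariant: this rests on the observation that $\zeta_\A$ is determined by the edge-counts $\card{\A(a,b)}$ and that inversion in the matrix algebra $\iac{k}{\A}$ of $n \times n$ matrices uses no information about composition beyond what is encoded in $\zeta_\A$. The equality $\chi(\A) = \chi(\gr{N\A})$ is the deeper topological fact, but the excerpt permits me to cite it. Given both, the proposition requires no further finiteness or skeletality bookkeeping beyond checking that the common hypotheses transfer verbatim to $\A$ and $\A'$, which they do by assumption. Accordingly I expect the proof to be short, and I anticipate no real difficulty; the care needed is only in noting explicitly that two categories with a common underlying graph necessarily share both their object set and their coarse zeta function.
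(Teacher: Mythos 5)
Your proof is correct and is essentially the paper's own argument: the paper marks this proposition as an immediate consequence of Proposition~2.11 of~\cite{ECC} together with the observation that the coarse M\"obius function (hence $\chi$) depends only on the underlying directed graph, which is exactly the chain $\chi(\gr{N\A}) = \chi(\A) = \chi(\A') = \chi(\gr{N\A'})$ you give. Your added remark that a common graph forces a common object set and coarse zeta function is just the explicit justification of the graph-invariance the paper asserts, so the two proofs coincide.
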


Now, the theory of group homology is set up so that the homology of a group
is equal to the homology of its classifying space.  If we wish the
analogous statement to be true of Euler characteristic of categories (under
finiteness conditions), Proposition~\ref{propn:cs-indt} \emph{forces} it to
be independent of composition.

One could, nonetheless, develop the theories of coarse M\"obius inversion and
Euler characteristic for arbitrary directed graphs.  Many of the results
in~\cite{ECC} and~\cite{ECCSDS} involve categorical concepts: automorphisms,
epi-mono factorization, equivalences, adjunctions, fibrations, \ldots.  In
principle, it must be possible to rephrase them purely in terms of graphs, but
it is not yet clear that it is fruitful to do so.  Perhaps the following
situation is comparable.  Limits in a category $\cat{C}$ are usually phrased
in terms of a functor $\scat{I} \to \cat{C}$, even though the definition of
limit does not use the category structure on $\scat{I}$.  One could therefore
rephrase all results about limits in terms of graphs $\scat{I}$; but it is not
clear that this is a useful step to take.

\subsection*{Comparison between fine and coarse}

In the interests of describing the relationship between fine and coarse
M\"obius inversion as soon as possible, we first state a result under
unnecessarily restrictive hypotheses.  It first appeared as Proposition~3.6 of
Haigh~\cite{Haig}, and was also stated at the end of Section~4 of \cite{ECC}.
The unrestricted form appears as Theorem~\ref{thm:comparison-pf} below.

Fix a rig $k$.  

\begin{thm}[Haigh]     \label{thm:comparison-fin}
Let $\A$ be a finite category.  If $\A$ has fine M\"obius inversion over $k$
then $\A$ also has coarse M\"obius inversion over $k$, given for $a, b \in \A$ 
by
\[
\mu_\A(a, b) 
=
\sum_{f \in \A(a, b)} \mu_\A(f).
\]
\end{thm}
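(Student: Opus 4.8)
The plan is to show that the function $\nu \in \iac{k}{\A}$ defined by $\nu(a,b) = \sum_{f \in \A(a,b)} \mu_\A(f)$ is a two-sided inverse to the coarse zeta function $\zeta_\A$ in the coarse incidence algebra. Since $\iac{k}{\A}$ is a matrix algebra over the $n$ objects, a two-sided inverse is unique once it exists, so it suffices to verify that $\nu \conv \zeta_\A = \delta$ and $\zeta_\A \conv \nu = \delta$ (where $\delta$ is the Kronecker delta, the coarse unit). I will carry the computation for one side; the other is symmetric.

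**First I would** unwind the coarse convolution. For $a,b \in \A$,
\[
(\nu \conv \zeta_\A)(a,b)
= \sum_{c \in \A} \nu(a,c)\, \zeta_\A(c,b)
= \sum_{c \in \A} \Bigl( \sum_{g \in \A(a,c)} \mu_\A(g) \Bigr) \card{\A(c,b)}.
\]
The key bookkeeping step is to recognize the inner expression as a sum over composable pairs. Since $\zeta_\A(c,b) = \card{\A(c,b)}$ counts the maps $h \in \A(c,b)$, and each such $h$ contributes a factor $\zeta_\A(h) = 1$ in the fine incidence algebra, I would rewrite the whole quantity as a double sum over $g \in \A(a,c)$ and $h \in \A(c,b)$, then reindex by the composite $f = hg \in \A(a,b)$. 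Because every map $f \in \A(a,b)$ factors through a unique intermediate object for each factorization, summing over $c$ and over factorizations is exactly summing over all ways of writing $f = hg$. This turns the expression into
\[
\sum_{f \in \A(a,b)} \sum_{hg = f} \mu_\A(g)\, \zeta_\A(h)
= \sum_{f \in \A(a,b)} (\mu_\A \conv \zeta_\A)(f),
\]
where the inner convolution is now the \emph{fine} one.

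**Then I would** invoke the hypothesis that $\A$ has fine M\"obius inversion: by definition $\mu_\A = \zeta_\A^{-1}$ in $\iaf{k}{\A}$, so $\mu_\A \conv \zeta_\A = \delta$, the fine unit. Thus $(\mu_\A \conv \zeta_\A)(f) = \delta(f)$, which is $1$ when $f$ is an identity $1_a$ and $0$ otherwise. Summing over $f \in \A(a,b)$ leaves only the identity maps: the result is $\card{\{ f \in \A(a,b) : f = 1_a \}}$, which equals $1$ when $a = b$ and $0$ otherwise (here skeletality is not even needed, since $1_a \in \A(a,b)$ forces $a = b$). Hence $(\nu \conv \zeta_\A)(a,b) = \delta(a,b)$, as required. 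The symmetric computation using $\zeta_\A \conv \mu_\A = \delta$ gives the other side.

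**The step I expect to be the main obstacle** is the reindexing justification: making precise that summing a factorization-indexed quantity over all intermediate objects $c$ and over all $(g,h)$ with $g \in \A(a,c)$, $h \in \A(c,b)$ coincides with summing over all $f \in \A(a,b)$ of its fine convolution value. The content is that the partition of factorizations $f = hg$ according to the intermediate object is exhaustive and disjoint, which is immediate, but one must also note that fine finiteness guarantees each fine convolution sum is finite and coarse finiteness guarantees the outer sums are finite, so all the interchanges of summation are legitimate. No genuine analytic subtlety arises once finiteness is in hand; the argument is purely a Fubini-style regrouping.
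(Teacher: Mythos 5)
Your proof is correct, and it is exactly the ``direct calculation'' that the paper mentions is possible but declines to write out: the paper instead derives Theorem~\ref{thm:comparison-fin} from its functoriality framework in Section~\ref{sec:fun}. There, the map $\Sigma \cln \iaf{k}{\A} \to \iac{k}{\A}$ given by $(\Sigma\alpha)(a,b) = \sum_{f \in \A(a,b)} \alpha(f)$ is recognised as $F_!$ for the identity-on-objects, finite-fibre functor $F \cln \A \to \codisc{\A}$ into the codiscrete category, via the isomorphism $\iac{k}{\A} \iso \iaf{k}{(\codisc{\A})}$; Proposition~\ref{propn:fun} then says $\Sigma$ is an algebra homomorphism, it visibly sends the fine $\zeta_\A$ to the coarse $\zeta_\A$, and an algebra homomorphism maps inverses to inverses, so it sends the fine $\mu_\A$ to the coarse $\mu_\A$ --- which is the stated formula. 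Your Fubini-style regrouping (summing over intermediate objects $c$ and factorizations $f = hg$) is precisely the computation hidden inside the proof that $\Sigma$ preserves multiplication, instantiated at the pair $(\mu_\A, \zeta_\A)$; the combinatorial core is the same, only the packaging differs. Your route buys self-containedness: two convolution identities checked by hand, with no categorical apparatus. The paper's route buys reuse and scope: once $\Sigma$ is known to be a homomorphism, the one-line inverse-preservation argument extends verbatim to more general settings, which is how the paper upgrades the result to patch-finite (possibly infinite) categories in Theorem~\ref{thm:comparison-pf}, where your outer sums over all objects $c$ would no longer be obviously finite and the bookkeeping would have to be redone patchwise.
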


This can easily be proved by a direct calculation, but a proof also arises
naturally in our abstract development (Section~\ref{sec:fun}).

The following corollary is due to Mat\'ias Menni (private communication,
2010).

\begin{cor}[Menni]      \label{cor:Menni}
Let $\A$ and $\scat{A'}$ be finite categories with fine M\"obius inversion
over $k$.  Suppose that $\A$ and $\scat{A'}$ have the same underlying directed
graph.  Then for all objects $a, b$,
\[
\sum_{f\cln a \to b} \mu_{\A}(f) 
= 
\sum_{f\cln a \to b} \mu_{\scat{A'}}(f).
\]
\end{cor}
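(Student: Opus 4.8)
The plan is to reduce Corollary~\ref{cor:Menni} to Theorem~\ref{thm:comparison-fin} together with the observation, already made in the text, that the coarse M\"obius function depends only on the underlying directed graph. Both $\A$ and $\scat{A'}$ are finite and have fine M\"obius inversion over $k$, so by Theorem~\ref{thm:comparison-fin} each has coarse M\"obius inversion over $k$, and moreover
\[
\sum_{f\cln a \to b} \mu_{\A}(f) = \mu_{\A}(a, b),
\qquad
\sum_{f\cln a \to b} \mu_{\scat{A'}}(f) = \mu_{\scat{A'}}(a, b),
\]
where on each right-hand side $\mu$ denotes the \emph{coarse} M\"obius function. Thus the two sums we wish to equate are nothing but the coarse M\"obius functions of $\A$ and $\scat{A'}$, evaluated at the same pair of objects $(a,b)$.

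It therefore suffices to show that the coarse M\"obius functions of $\A$ and $\scat{A'}$ coincide. The key point is that the coarse M\"obius function is computed entirely from the coarse zeta function $\zeta$ via matrix inversion in $\iac{k}{\A}$, and $\zeta_{\A}(a,b) = \card{\A(a,b)}$ depends only on the number of arrows $a \to b$, i.e.\ only on the underlying directed graph. Since $\A$ and $\scat{A'}$ are assumed to have the same underlying directed graph, they have the same object set and the same $\zeta$, hence (inversion being determined by $\zeta$ alone) the same $\mu$. This is exactly the remark made earlier in the text: the coarse M\"obius function does not depend on composition, only on the underlying graph.

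One subtlety to address is that the hypotheses of Theorem~\ref{thm:comparison-fin} do not presuppose skeletality, whereas coarse M\"obius inversion over a nontrivial rig forces a category to be skeletal; but this causes no difficulty, since the conclusion of Theorem~\ref{thm:comparison-fin} asserts that coarse M\"obius inversion \emph{does} exist under the stated fine hypotheses, so we may simply apply it. (The trivial-rig case is immediate, as all the sums are forced to lie in a one-element rig.) I expect the main content to be not an obstacle but a matter of correctly chaining these two facts: the existence clause of Theorem~\ref{thm:comparison-fin}, which converts fine inversion into coarse inversion and expresses the fibrewise sum of the fine M\"obius function as the coarse M\"obius function, and the graph-dependence of the coarse M\"obius function. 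Putting the two displayed equalities together with $\mu_{\A}(a,b) = \mu_{\scat{A'}}(a,b)$ yields the claim.
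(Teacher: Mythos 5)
Your proposal is correct and follows essentially the same route as the paper's own proof: apply Theorem~\ref{thm:comparison-fin} to identify each fibrewise sum with the coarse M\"obius function evaluated at $(a,b)$, then observe that the coarse M\"obius function, being the inverse of the coarse zeta function $\zeta(a,b) = \card{\A(a,b)}$, depends only on the underlying directed graph and hence agrees for $\A$ and $\scat{A'}$. The paper states this in two sentences; your extra remarks (spelling out that inversion is determined by $\zeta$ alone, and the aside on skeletality) are harmless elaborations of the same argument.
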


\begin{proof}
By Theorem~\ref{thm:comparison-fin}, $\A$ and $\scat{A'}$ have coarse M\"obius
inversion and the equation is equivalent to $\mu_{\A}(a, b) =
\mu_{\scat{A'}}(a, b)$.  This is true because the coarse M\"obius function of
a category depends only on its underlying graph.  \done
\end{proof}

\begin{cor}     \label{cor:Menni-Euler}
Let $\A$ and $\scat{A'}$ be finite categories with fine M\"obius inversion
over $k$.  Suppose that $\A$ and $\scat{A'}$ have the same underlying directed
graph.  Then
\marginpar{%
\setlength{\unitlength}{1mm}%
\begin{picture}(0,0)\put(-6.5,-8.5){$\Box$}\end{picture}}
\[
\sum_{f \in \arr{\A}} \mu_{\A}(f) 
= 
\sum_{f \in \arr{\scat{A}'}} \mu_{\scat{A'}}(f).
\]
\end{cor}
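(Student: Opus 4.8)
The plan is to deduce this immediately from Corollary~\ref{cor:Menni} by regrouping the sum over all arrows as a double sum indexed by ordered pairs of objects. The crucial point is that since $\A$ and $\scat{A'}$ have the same underlying directed graph, they have the same object set $\obc{\A} = \obc{\scat{A'}}$, and for every ordered pair $(a,b)$ the hom-set $\A(a,b)$ equals $\scat{A'}(a,b)$ as a set of edges. Hence the full arrow set $\arr{\A}$ is the disjoint union of the $\A(a,b)$ over all $(a,b)$, and identically for $\scat{A'}$.

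First I would rewrite the left-hand side as a double sum,
\[
\sum_{f \in \arr{\A}} \mu_{\A}(f) = \sum_{a, b \in \A} \ \sum_{f \in \A(a,b)} \mu_{\A}(f),
\]
which is merely a partition of one finite sum. I would then apply Corollary~\ref{cor:Menni} to each fixed pair $(a,b)$, replacing the inner sum by $\sum_{f \in \scat{A'}(a,b)} \mu_{\scat{A'}}(f)$; summing over all pairs and regrouping in the reverse direction yields $\sum_{f \in \arr{\scat{A'}}} \mu_{\scat{A'}}(f)$, which is the right-hand side.

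There is no genuine obstacle in this last step: all the substance is already contained in Corollary~\ref{cor:Menni}, and so ultimately in Theorem~\ref{thm:comparison-fin} together with the fact that the coarse M\"obius function depends only on the underlying graph. Indeed, one can make the role of that fact explicit and bypass Corollary~\ref{cor:Menni} altogether: by Theorem~\ref{thm:comparison-fin} (which also guarantees that $\A$ has coarse M\"obius inversion, so that the quantities below are defined) the coarse M\"obius function of $\A$ satisfies $\mu_{\A}(a,b) = \sum_{f \in \A(a,b)} \mu_{\A}(f)$, so the double sum above is exactly $\sum_{a,b \in \A} \mu_{\A}(a,b) = \chi(\A)$. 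Each side of the desired identity is therefore the Euler characteristic of the respective category, and these agree because the coarse M\"obius function is an invariant of the underlying directed graph. This second route exhibits the corollary as, in essence, a statement about Euler characteristics.
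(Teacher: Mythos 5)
Your proof is correct and matches the paper's treatment: the paper leaves this corollary as an immediate consequence of Corollary~\ref{cor:Menni} (hence the box in the margin, with no written proof), obtained exactly by partitioning $\arr{\A}$ into the hom-sets $\A(a,b)$ and summing over all pairs of objects. Your alternative phrasing via coarse M\"obius inversion and Euler characteristics is likewise the paper's own perspective, as it remarks immediately after the statement that the two sides are $\chi(\A)$ and $\chi(\scat{A'})$.
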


The two sides of this equation are the Euler characteristics of $\A$ and
$\scat{A'}$.  But note that Corollaries~\ref{cor:Menni}
and~\ref{cor:Menni-Euler}, while proved using the coarse theory, refer only
to the theory of \emph{fine} M\"obius inversion.

\section{Functoriality}
\label{sec:fun}

We have seen that each sufficiently finite category $\A$ gives rise to a
$k$-algebra $\iaf{k}{\A}$, for each rig $k$.  Here we show how this process
can be made functorial in $\A$.  Although we deal primarily with fine
incidence algebras, the coarse ones enter naturally as the story unfolds.

There is a well-known way to make $\A \mapsto \iaf{k}{\A}$ functorial in the
\emph{contravariant} sense, using functors with unique lifting of
factorizations.  This is discussed in Section~\ref{sec:fun-re}, but is not
needed to achieve the main aims of this paper.  Instead, we make $\A \mapsto
\iaf{k}{\A}$ into a \emph{covariant} functor.

Let $\A$ and $\B$ be finely finite categories.  Let $F\cln \A \to \B$ be a
functor with \demph{finite fibres}, meaning that for each $g \in \arr{\B}$,
the set $\{ f \in \arr{\A} \such Ff = g\}$ is finite.  (This implies the
analogous condition on objects.)  There is an induced $k$-linear map
\[
F_!\cln \iaf{k}{\A} \to \iaf{k}{\B}
\]
defined for $\alpha \in \iaf{k}{\A}$ and $g \in \arr{\B}$ by 
\[
(F_! \alpha)(g) 
=
\sum_{f\cln Ff = g}
\alpha(f).
\]

This covariant functoriality was introduced by Content, Lemay and
Leroux~\cite{CLL}.  The following result is close to their Proposition~5.6.

\begin{propn}   \label{propn:fun}
Let $\A$ and $\B$ be finely finite categories, and let $F\cln \A \to \B$ be a
functor with finite fibres.  Then $F_!\cln \iaf{k}{\A} \to \iaf{k}{\B}$ is an
algebra homomorphism for all rigs $k$ if and only if $F$ is bijective on
objects. 
\end{propn}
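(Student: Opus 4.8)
The plan is to prove both directions by unwinding the definitions of the convolution products and the map $F_!$, and comparing coefficients.

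First I would address the easier direction: if $F$ is bijective on objects, then $F_!$ is an algebra homomorphism. I would check that $F_!$ preserves the unit and the multiplication. For the unit, I must verify $F_!(\delta_\A) = \delta_\B$. Evaluating at $g \in \arr{\B}$, the left side sums $\delta_\A(f)$ over all $f$ with $Ff = g$; the only nonzero contributions come from identities $f = 1_a$, and $F(1_a) = 1_{Fa}$. So $(F_!\delta_\A)(g)$ counts the objects $a$ with $1_{Fa} = g$, which is nonzero only when $g$ is itself an identity $1_b$, in which case the count equals the number of objects $a$ with $Fa = b$. Here is where bijectivity on objects is used: it guarantees exactly one such $a$, giving value $1$, so $F_!\delta_\A = \delta_\B$. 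For multiplicativity, I would expand $(F_!(\alpha \conv \beta))(g)$ as a double sum over factorizations $f = h'g'$ with $Ff = g$, and compare it with $((F_!\alpha)\conv(F_!\beta))(g)$, which is a sum over factorizations $g = h k$ in $\B$ together with preimages of $h$ and $k$ under $F$. The point is that a factorization of $f$ with $Ff=g$ pushes forward to a factorization of $g$, and conversely; the fibres match up correctly precisely because $F$ is bijective on objects, ensuring the intermediate object is uniquely determined. I expect this bookkeeping to be the most delicate part of the routine verification.

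For the converse, I would prove the contrapositive: if $F$ is not bijective on objects, then $F_!$ fails to be an algebra homomorphism for some rig $k$. I would exhibit a specific failure using the unit. Suppose $F$ is not bijective on objects. If $F$ is not surjective on objects, pick $b \in \obc{\B}$ not in the image; then $(F_!\delta_\A)(1_b) = 0$ while $\delta_\B(1_b) = 1$, so $F_!$ does not preserve the unit. If $F$ is not injective on objects, pick distinct $a, a' \in \obc{\A}$ with $Fa = Fa' = b$; then $(F_!\delta_\A)(1_b) \geq 2$ (the element $2 = 1+1 \in k$), which differs from $\delta_\B(1_b) = 1$ in any nontrivial rig. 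Either way $F_!$ fails to preserve the multiplicative identity, hence is not an algebra homomorphism.

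The main obstacle will be the multiplicativity computation in the forward direction: I must track how factorizations upstairs correspond to factorizations downstairs together with choices in the fibres, and confirm that bijectivity on objects makes the intermediate-object sums agree term by term. The unit computation, by contrast, is where bijectivity on objects shows up most transparently, and it is exactly this computation that drives the converse, so I would organize the proof to foreground the unit condition as the crux.
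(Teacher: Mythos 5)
Your overall strategy coincides with the paper's: the unit computation is the crux, bijectivity on objects is exactly what makes $(F_!\delta_\A)(1_b)$ count to $1$, and multiplicativity is a routine comparison of factorizations. (One small refinement, which the paper makes explicit: for multiplicativity, injectivity on objects already suffices, since it is what forces a pair of maps in $\A$ whose images are composable in $\B$ to be composable in $\A$; full bijectivity is only needed for the unit.)

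However, one step of your converse is wrong as stated. In the non-injective case you claim that the element $n \cdot 1$ (where $n \geq 2$ is the cardinality of the fibre over $b$) differs from $1$ ``in any nontrivial rig''. This is false: in the Boolean rig $\{0,1\}$ with $1 + 1 = 1$, or in any idempotent rig, one has $n \cdot 1 = 1$ for all $n \geq 1$, so $F_!$ \emph{does} preserve the unit over such rigs even when $F$ is two-to-one on objects. The repair is immediate, and is exactly what the paper does: since the hypothesis is that $F_!$ is a homomorphism for \emph{all} rigs $k$, it suffices to exhibit a failure for a single rig, and taking $k = \integers$ gives $(F_!\delta_\A)(1_b) = n \neq 1 = \delta_\B(1_b)$. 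With that substitution (your non-surjective case likewise uses $0 \neq 1$, which holds in $\integers$), your converse is correct and agrees with the paper's argument.
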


\begin{proof}
First consider preservation of identities.  For each map $g$ in $\scat{B}$, we
have 
\[
(F_! \delta_\A)(g)
=
\sum_{f\cln Ff = g} \delta_\A(f)
=
\card\{a \in \obc{\A} \such 1_{F(a)} = g\} \in k.
\]
If $g$ is not an identity then $(F_! \delta_\A)(g) = 0 = \delta_\B(g)$.  If
$g$ is an identity, say $g = 1_b$, then 
\[
(F_! \delta_\A)(1_b) 
=
\card{\{a \in \obc{\A} \such Fa = b\}}
\]
and $\delta_\B(1_b) = 1$.  Hence if $F$ is bijective on objects then $F_!
\delta_\A = \delta_\B$.  Conversely, if $F_!  \delta_\A = \delta_\B$ for $k =
\integers$ then $F$ is bijective on objects.

A straightforward calculation shows that if $F$ is injective on objects then
$F_!$ preserves binary multiplication.  \done
\end{proof}

Write $\Catboff$ for the category whose objects are finely finite categories
and whose maps are bijective-on-objects functors with finite fibres.  There is
a functor $\Catboff \to k\hyph\Alg$ defined by $\A \mapsto \iaf{k}{\A}$ and $F
\mapsto F_!$.

\begin{example} \label{eg:fun-co}
Given a category $\A$, write $\codisc{\A}$ for the codiscrete category with
the same objects as $\A$; thus, there is precisely one map $a \to b$ in
$\codisc{\A}$ for each pair $(a, b)$ of objects.  There is a unique
identity-on-objects functor $\A \to \codisc{\A}$.  Assume now that $\A$ is
(coarsely) finite.  Then $C\A$ is finely finite and $\A \to C\A$ has finite
fibres.

The coarse incidence algebra of $\A$ is the fine incidence algebra
of the codiscrete category on $\A$: 
\[
\iac{k}{\A} \iso \iaf{k}{(\codisc{\A})}.
\]
So the functor $\A \to \codisc{\A}$ induces a homomorphism of
$k$-algebras
\[
\Sigma\cln \iaf{k}{\A} \to \iac{k}{\A}.
\]
Explicitly,
\begin{equation}        \label{eq:Sigma-def}
(\Sigma \alpha)(a, b)
=
\sum_{f \in \A(a, b)} \alpha(f)
\end{equation}
($\alpha \in \iaf{k}{\A}$, $a, b \in \A$).  The image under $\Sigma$ of
the fine zeta function $\zeta_\A \in \iaf{k}{\A}$ is the coarse zeta function
$\zeta_\A \in \iac{k}{\A}$.  This proves Haigh's
comparison theorem:
\end{example}

\begin{prooflike}{Proof of Theorem~\ref{thm:comparison-fin}}
$\Sigma\cln \iaf{k}{\A} \to \iac{k}{\A}$ is an algebra homomorphism mapping
$\zeta_\A \in \iaf{k}{\A}$ to $\zeta_\A \in \iac{k}{\A}$, so it also maps
$\mu_\A = \zeta_\A^{-1} \in \iaf{k}{\A}$ to $\mu_\A = \zeta_\A^{-1} \in
\iac{k}{\A}$.  \done
\end{prooflike}

A \demph{preorder} on a set is a reflexive transitive binary relation.  The
2-categories of preordered and partially ordered sets are equivalent, so the
difference between the two types of structure is inessential; both will be
referred to as `posets'.

\begin{example} \label{eg:fun-po}
Let $\A$ be a small category.  There is a preorder on the set of objects of
$\A$ defined by $a \leq b$ if and only if there is at least one map $a \to
b$.  Denote the resulting poset by $\poset{\A}$.  There is a unique
identity-on-objects functor $\A \to \poset{\A}$.  

In order for this to induce a homomorphism $\iaf{k}{\A} \to
\iaf{k}{(\poset{\A})}$, and in order for the algebras $\iaf{k}{\A}$ and
$\iaf{k}{(\poset{\A})}$ to be defined at all, some finiteness conditions must
hold.  We defer precise discussion of those conditions to the next section,
temporarily making the simplifying assumption that $\A$ is coarsely finite.
This suffices.

Write
\[
\iap{k}{\A} = \iaf{k}{(\poset{\A})}.
\]
Thus, $\iap{k}{\A}$ consists of the functions $\{(a, b) \in \obc{\A} \times
\obc{\A} \such \A(a, b) \neq \emptyset\} \to k$.  It can also be seen as a
subalgebra of $\iac{k}{\A}$: 
\begin{equation}        \label{eq:iap-sub}
\iap{k}{\A} 
\iso
\{ \alpha \in \iac{k}{\A}
\such
\A(a, b) = \emptyset \implies \alpha(a, b) = 0 \}.
\end{equation}
The functor $\A \to \poset{\A}$ induces a homomorphism $\Sigma\cln \iaf{k}{\A}
\to \iap{k}{\A}$, given by equation~\eqref{eq:Sigma-def} above.  So we have a
commutative triangle of $k$-algebras as in Fig.~\ref{fig:triangles}(d).
\end{example}

\begin{figure}
\centering
\vspace*{-6mm}
\begin{tabular}{c@{\qquad}c@{\qquad}c@{\qquad}c}
\begin{xy}
(10,0)*+{\,\One}="br";
(-10,0)*+{\Two\,}="bl";
(0,14)*+{\Set}="top";
{\ar@<1.2ex>@{_{(}->} "br";"bl"};
{\ar@/^.4pc/_{\rotatebox{90}{$\scriptstyle\!\!\!\vdash$}} "bl";"br"};
{\ar@<-1.2ex>@{^{(}->} "br";"top"};
{\ar@/_.4pc/^{\rotatebox{35}{$\scriptstyle\!\!\!\dashv$}} "top";"br"};
{\ar@<1ex>@{_{(}->}_{\rotatebox{-35}{$\scriptstyle\!\!\vdash$}} "bl";"top"};
{\ar@/^.4pc/ "top";"bl"};
\end{xy}
&
\begin{xy}
(10,0)*+{\,\Set}="br";
(-10,0)*+{\Poset}="bl";
(0,14)*+{\Cat}="top";
{\ar@<1ex>@{_{(}->} "br";"bl"};
{\ar@/^.4pc/_{\rotatebox{90}{$\scriptstyle\!\!\!\vdash$}} "bl";"br"};
{\ar@<-1ex>@{^{(}->} "br";"top"};
{\ar@/_.4pc/^{\rotatebox{35}{$\scriptstyle\!\!\!\dashv$}} "top";"br"};
{\ar@<1ex>@{_{(}->}_{\rotatebox{-35}{$\scriptstyle\!\!\vdash$}} "bl";"top"};
{\ar@/^.4pc/ "top";"bl"};
\end{xy}
&
\begin{xy}
(10,0)*+{C\A}="br";
(-10,0)*+{P\A}="bl";
(0,14)*+{\A}="top";
{\ar@<-.5ex>@{^{(}->} "bl";"br"};
{\ar_{\text{unit}} "top";"bl"};
{\ar^{\text{unit}} "top";"br"};
\end{xy}
&
\begin{xy}
(10,0)*+{\iac{k}{\A}}="br";
(-10,0)*+{\iap{k}{\A}}="bl";
(0,14)*+{\iaf{k}{\A}}="top";
{\ar@<-.5ex>@{^{(}->} "bl";"br"};
{\ar_{\Sigma} "top";"bl"};
{\ar^{\Sigma} "top";"br"};
\end{xy}\\[2ex]
(a) &(b) &(c) &(d)
\end{tabular}
\caption{Abstract origins of the three incidence algebras.}
\label{fig:triangles}
\end{figure}
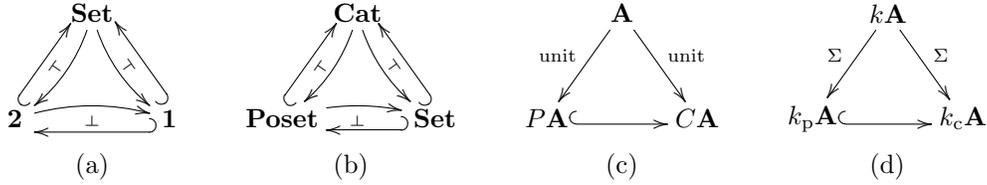

This commutative triangle also arises inexorably from very simple origins, by
applying standard categorical constructions.  We start with the inclusions of
categories
\[
\One \incl \Two \incl \Set.
\]
Here $\Two$ is the full subcategory of $\Set$ consisting of the empty set
$\emptyset$ and the one-element set $1$, and $\One$ is the subcategory
consisting of $1$ alone.  Both inclusions have left adjoints, giving the
commutative triangle of Fig.~\ref{fig:triangles}(a).  Moreover, all the
categories have finite products and all the functors preserve them.  So we may
apply the 2-functor $\cat{V} \mapsto \cat{V}\hyph\Cat$, giving the commutative
triangle of Fig.~\ref{fig:triangles}(b).  The adjunction $\Poset \oppairu
\Cat$ induces a monad $P$ on $\Cat$, the adjunction $\Set \oppairu \Cat$
induces a monad $C$ on $\Cat$, and the adjunction $\Poset \oppairu \Set$
induces a map of monads $P \to C$.  So for each $\A \in \Cat$, we obtain a
commutative triangle as in Fig.~\ref{fig:triangles}(c).

The categories $P\A$ and $C\A$, and all three functors, are the same as in the
explicit descriptions above.  In particular, the functors are bijective on
objects.  So assuming that $\A$ is finite, we may take fine
incidence algebras throughout, and the result is the commutative
triangle of Fig.~\ref{fig:triangles}(d).

\section{M\"obius inversion for infinite categories}
\label{sec:inf}

Here we extend the theory of coarse M\"obius inversion to a class of infinite
categories.  The relationship between coarse and fine M\"obius inversion,
stated in Theorem~\ref{thm:comparison-fin}, persists. 

Fix a rig $k$.  Assume that $k$ has \demph{characteristic zero}, in the sense
that $0$ is the only natural number $n$ satisfying $n\cdot 1 = 0 \in k$.

The finiteness condition that we are about to introduce can be motivated both
pragmatically and abstractly.  

Pragmatically, we seek the minimal finiteness conditions on a category $\A$
allowing the apparatus of coarse M\"obius inversion to be set up.
First, for $\zeta_\A$ to make sense, the homsets of $\A$ must be finite.
Second, if $\zeta_\A$ is to belong to an incidence algebra with the usual kind
of convolution product, then in particular $\zeta_\A \conv \zeta_\A$ must be
defined; and since we have no way of handling infinite sums, we require that
for each $a, b \in \A$, there are only finitely many $c \in \A$ such that
$\zeta_\A(a, c) \zeta_\A(c, b) \neq 0$.  For that to be true over all
rigs, for each $a, b$ there can be only finitely many $c$ such that there
exist maps $a \to c \to b$.

We will see that these two requirements suffice.

\begin{defn}
Let $a$ and $b$ be objects of a category $\A$.  The \demph{patch}
$\patch{\A}{a}{b}$ is the full subcategory of $\A$ with objects
$\{c \in \A \such \text{there exist maps } a \to c \to b \}$.
\end{defn}
(A patch might also be called a `coarse interval', and the intervals
of~\cite{LaMe} `fine intervals'.)

\begin{lemma}   \label{lemma:tfae-cif}
The following conditions on a category $\A$ are equivalent:
\begin{enumerate}
\item   \label{part:tfae-cif-int}
for all $a, b \in \A$, the patch $\patch{\A}{a}{b}$ is a finite category

\item   \label{part:tfae-cif-ff}
$\A$ is finely finite and has finite homsets

\item   \label{part:tfae-cif-elem}
for all $a, b \in \A$, the set $\{ c \in \A \such \text{there exist
maps } a \to c \to b \}$ is finite, and $\A$ has finite homsets.
\done
\end{enumerate}
\end{lemma}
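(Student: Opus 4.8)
The plan is to establish the cycle of implications \ref{part:tfae-cif-int} $\implies$ \ref{part:tfae-cif-elem} $\implies$ \ref{part:tfae-cif-ff} $\implies$ \ref{part:tfae-cif-int}. The organizing observation, which I would record first, is a structural description of the patch: its set of objects is exactly the set $S_{a,b} = \{ c \in \A \such \text{there exist maps } a \to c \to b \}$ appearing in \ref{part:tfae-cif-elem}, and its set of arrows is the disjoint union $\bigcup_{c, c' \in S_{a,b}} \A(c, c')$, since the patch is a \emph{full} subcategory. Consequently $\patch{\A}{a}{b}$ is a finite category if and only if $S_{a,b}$ is finite \emph{and} each homset $\A(c, c')$ with $c, c' \in S_{a,b}$ is finite; this reduces every condition to bookkeeping about $S_{a,b}$ and about homsets.

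Two of the three implications are then routine counting. For \ref{part:tfae-cif-int} $\implies$ \ref{part:tfae-cif-elem}: finiteness of the patch gives finiteness of its object set $S_{a,b}$ immediately, and it yields finite homsets because whenever $\A(a,b) \neq \emptyset$ both $a$ and $b$ lie in $\patch{\A}{a}{b}$, so $\A(a,b)$ is a homset of a finite category (the case $\A(a,b) = \emptyset$ being trivial). For \ref{part:tfae-cif-elem} $\implies$ \ref{part:tfae-cif-ff}: finite homsets are assumed, and fine finiteness follows because any factorization $a \toby{g} c \toby{h} b$ of a fixed $f \cln a \to b$ forces $c \in S_{a,b}$, so the number of such factorizations is bounded by $\sum_{c \in S_{a,b}} \card{\A(a, c)} \cdot \card{\A(c, b)}$, a finite sum of finite terms.

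The implication I expect to be the crux is \ref{part:tfae-cif-ff} $\implies$ \ref{part:tfae-cif-int}. Finite homsets are again given, so by the opening observation it suffices to prove that $S_{a,b}$ is finite. The obstacle is that fine finiteness only controls factorizations of each \emph{individual} map $a \to b$, whereas $S_{a,b}$ collects intermediate objects witnessing a map $a \to c$ and a map $c \to b$ that need not compose to any prescribed arrow. The key step is to notice that each $c \in S_{a,b}$ nonetheless does witness a factorization of \emph{some} map $a \to b$: choosing $g \cln a \to c$ and $h \cln c \to b$, the composite $hg$ lies in $\A(a,b)$ and $a \toby{g} c \toby{h} b$ factors it. Hence $S_{a,b}$ is contained in the set of all intermediate objects occurring in factorizations of maps in $\A(a,b)$. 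Here finiteness of homsets does the essential bridging work: $\A(a,b)$ is finite, and fine finiteness makes the factorizations of each of its (finitely many) elements finite, so this containing set---and therefore $S_{a,b}$---is finite. Finiteness of the patch's arrow set then follows from the opening observation, closing the cycle.
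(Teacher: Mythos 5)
Your proof is correct. Note that the paper states this lemma with no proof at all (the $\Box$ appears directly after the statement, the equivalences being regarded as routine), so there is no authorial argument to compare against; your cycle of implications supplies exactly the omitted verification. In particular, you correctly isolate the one point requiring a genuine idea---in (\ref{part:tfae-cif-ff}) $\implies$ (\ref{part:tfae-cif-int}), finiteness of the homset $\A(a,b)$ is what lets the factorization-finiteness of each individual map $a \to b$ control the whole object set of the patch---and your bookkeeping of the other two implications via the full-subcategory description of $\patch{\A}{a}{b}$ is sound.
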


A category $\A$ is \demph{patch-finite} if it satisfies the equivalent
conditions of Lemma~\ref{lemma:tfae-cif}.  For example, a poset $A$ is
patch-finite if and only if it is locally finite
(Example~\ref{eg:poset-fine}).  

We have met three finiteness conditions: coarse, patch and fine.  They are
not \latin{ad hoc}.  To see how they arise systematically, recall from
Section~\ref{sec:fun} that the inclusions $\One \incl \Two \incl \Set$ give
rise to three monads $Q$ on $\Cat$, namely, $C$, $P$ and the identity.  To
make $\iaf{k}{(Q\A)}$ into an algebra, we require $Q\A$ to be finely
finite.  To furnish $\iaf{k}{(Q\A)}$ with a zeta function, we want to
transport the zeta function of $\iaf{k}{\A}$ along the unit map $\A \to Q\A$,
and for that we require $\A \to Q\A$ to have finite fibres.  So: to make
the basic definitions, we require $Q\A$ to be finely finite and $\A \to
Q\A$ to have finite fibres.

In the case $Q = \id$, this just says that $\A$ is finely finite.  In the case
$Q = P$, it says that $\A$ is patch-finite (by
Lemma~\ref{lemma:tfae-cif}(\ref{part:tfae-cif-elem})).  In the case $Q = C$,
it says that $\A$ is coarsely finite.  In fact, the three conditions are
successively stronger: 
\[
\text{coarsely finite }
\implies
\text{ patch-finite }
\implies
\text{ finely finite}.
\]

Let $\A$ be a patch-finite category.  Then the algebra $\iap{k}{\A} =
\iaf{k}{(P\A)}$ is defined and the induced map $\Sigma\cln \iaf{k}{\A} \to
\iap{k}{\A}$ is a homomorphism; the \demph{coarse zeta function} $\zeta_\A \in
\iap{k}{\A}$ is the image under $\Sigma$ of $\zeta_\A \in \iaf{k}{\A}$.
Explicitly, $\iap{k}{\A}$ is the submodule of $\iac{k}{\A}$ specified
in~\eqref{eq:iap-sub}, and the product on $\iap{k}{\A}$ is given by
\[
(\alpha\conv\beta)(a, b)
=
\sum_{c \in \patch{\A}{a}{b}} \alpha(a, c) \, \beta(c, b)
\]
($\alpha, \beta \in \iap{k}{\A}$, $a, b \in \A$).  (There is no product
defined on the larger module $\iac{k}{\A}$ unless $\A$ is finite.)  As before,
the zeta function is given explicitly by $\zeta_\A(a, b) = \card{(\A(a, b))}
\in k$.

For example, when $A$ is a locally finite poset, $\iap{k}{A}$ is the
classical incidence algebra.

\begin{defn}    \label{defn:patch-mi}
A patch-finite category $\A$ has \demph{coarse M\"obius inversion}
if $\zeta_\A \in \iap{k}{\A}$ is invertible.  In that case, its \demph{coarse
M\"obius function} is $\mu_\A = \zeta_\A^{-1} \in \iap{k}{\A}$.
\end{defn}

\latin{Prima facie}, we should have used different terminology: 
`patch M\"obius inversion/function'.  After all, when $\A$ is a finite
category, $\iap{k}{\A}$ is in general a \emph{proper} subalgebra of
$\iac{k}{\A}$, so one might think that it would be easier to invert $\zeta_\A$
in $\iac{k}{\A}$ than in $\iap{k}{\A}$.  It is a nontrivial fact that it makes
no difference (Corollary~\ref{cor:no-diff}).  Definition~\ref{defn:patch-mi}
is therefore consistent with the definitions for finite categories.

\begin{lemma}   \label{lemma:trans}
Let $\A$ be a finite category, $n \geq 0$, and $a_0, \ldots, a_n \in \A$.
Then
\[
\zeta(a_0, a_1) \cdots \zeta(a_{n-1}, a_n) \neq 0
\ \implies\ 
\zeta(a_0, a_n) \neq 0.
\]
\end{lemma}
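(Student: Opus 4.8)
The plan is to strip away the rig-theoretic wrapping and reduce the claim to the bare fact that morphisms compose. Everything rests on unwinding the definition $\zeta(a, b) = \card{\A(a, b)} \in k$, together with the single structural feature that in any rig $0$ is absorbing: $0 \cdot x = 0$. This means a product in $k$ can be nonzero only when every factor is nonzero, so I never need $k$ to be an integral domain; the direction I require holds over an arbitrary rig, and characteristic zero will enter only at the very end.

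First I would deduce from the hypothesis that each factor is individually nonzero: were any $\zeta(a_{i-1}, a_i)$ equal to $0$, the whole product $\zeta(a_0, a_1) \cdots \zeta(a_{n-1}, a_n)$ would vanish. Next I would translate nonvanishing in $k$ into nonemptiness of homsets. Since $\zeta(a_{i-1}, a_i) = \card{\A(a_{i-1}, a_i)} \cdot 1$, an empty homset $\A(a_{i-1}, a_i)$ would force $\card{\A(a_{i-1}, a_i)} = 0$ and hence $\zeta(a_{i-1}, a_i) = 0$; contrapositively, each homset $\A(a_{i-1}, a_i)$ is nonempty (this step uses nothing about the characteristic). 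I may therefore choose a map $f_i \cln a_{i-1} \to a_i$ for each $i$, and their composite $f_n \cdots f_1$ is a map $a_0 \to a_n$, witnessing that $\A(a_0, a_n)$ is nonempty and so $\card{\A(a_0, a_n)} \geq 1$.

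Finally I would invoke the standing assumption that $k$ has characteristic zero to pass back into $k$: since a strictly positive natural number is sent to a nonzero element of $k$, we get $\zeta(a_0, a_n) = \card{\A(a_0, a_n)} \cdot 1 \neq 0$, as required. The degenerate case $n = 0$ is immediate, the empty product being $1$ and $\zeta(a_0, a_0)$ being nonzero because of the identity $1_{a_0}$. There is really no obstacle here; the only point that repays attention is that characteristic zero is used exactly once, in this last implication, while the rest of the argument is valid over any rig precisely because we only ever exploit the easy direction \emph{product nonzero implies each factor nonzero}.
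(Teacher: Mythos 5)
Your proof is correct and takes essentially the same route as the paper's: both translate nonvanishing of $\zeta$-values into nonemptiness of homsets and then use composition (the paper phrases this as the map $\A(a_0,a_1)\times\cdots\times\A(a_{n-1},a_n)\to\A(a_0,a_n)$, you phrase it by choosing the $f_i$ and composing). Your observation that characteristic zero is needed only in the final implication, the rest holding over any rig, is a slightly sharper bookkeeping of hypotheses than the paper's terser wording, which invokes characteristic zero once to recast the whole statement as an equivalence, but the underlying argument is identical.
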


\begin{proof}
Since $k$ has characteristic zero, an equivalent statement is that if the set
$\A(a_0, a_1) \times \cdots \times \A(a_{n-1}, a_n)$ is nonempty then so is
$\A(a_0, a_n)$.  But since $\A$ is a category, there is a map from the first
set to the second, and the result follows.  \done
\end{proof}

\begin{thm}     \label{thm:zm-van}
Let $\A$ be a finite category with coarse M\"obius inversion over $k$.
Let $a, b \in \A$.  Then $\zeta_\A(a, b) = 0 \implies \mu_\A(a, b) = 0$.
\end{thm}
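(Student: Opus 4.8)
The plan is to reinterpret the statement as saying that the inverse $\mu_\A$ of $\zeta_\A$, taken in the full matrix algebra $\iac{k}{\A}$, in fact lies in the subalgebra $\iap{k}{\A}$ of~\eqref{eq:iap-sub}. Since $k$ has characteristic zero, $\zeta_\A(a,b) = \card{\A(a,b)}$ vanishes exactly when $\A(a,b) = \emptyset$, so the relation $a \preceq b \iff \zeta_\A(a,b) \neq 0$ is none other than the preorder $\poset{\A}$: it is reflexive because each $\A(a,a)$ contains an identity, and transitive by the $n = 2$ case of Lemma~\ref{lemma:trans}. Thus $\zeta_\A \in \iap{k}{\A}$, and the theorem asserts precisely that $\mu_\A \in \iap{k}{\A}$ as well.

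First I would pass to the strong components of $\preceq$, which form a genuine poset $Q$. Choosing a total order on the objects refining a linear extension of $Q$ makes $\zeta_\A$ block upper triangular: the block from a component $C$ to a component $C'$ is forced to be zero whenever $C \not\le C'$ in $Q$, and in particular whenever $C$ has strictly larger index than $C'$. I would then induct on the number of components, peeling off a maximal component $C_m$ (ordered last) to write
\[
\zeta_\A
=
\begin{pmatrix} Z' & B \\ 0 & D \end{pmatrix},
\]
where $Z' = \zeta_{\A'}$ is the zeta function of the full subcategory $\A'$ on the remaining objects, $D = \zeta_{C_m}$, and the lower-left block vanishes by maximality of $C_m$. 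If $Z'$ and $D$ are invertible, the block-inverse formula gives
\[
\mu_\A
=
\begin{pmatrix} (Z')^{-1} & -(Z')^{-1} B D^{-1} \\ 0 & D^{-1} \end{pmatrix},
\]
and I would finish as follows. The diagonal blocks are handled by the inductive hypothesis applied to $\A'$ and to $C_m$ (the latter vacuously, since every homset inside a single component is nonempty). For the off-diagonal block, a nonzero summand of $-(Z')^{-1}BD^{-1}$ at $(a,b)$ forces $\mu_{\A'}(a,c) \neq 0$, $\zeta_\A(c,d) \neq 0$ and $\mu_{C_m}(d,b) \neq 0$ for some $c,d$; the first gives $a \preceq c$ by the inductive hypothesis, the second gives $c \preceq d$, and $d \preceq b$ holds because $d,b$ lie in the single component $C_m$. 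Hence $a \preceq b$ by transitivity, so whenever $a \not\preceq b$ every summand vanishes and $\mu_\A(a,b) = 0$.

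The main obstacle is the step I assumed: that the diagonal blocks $Z'$ and $D$ of an invertible block-triangular matrix are themselves invertible. Over a ring this is immediate, since the determinant is a unit and factors as $\det Z' \cdot \det D$; but over a general characteristic-zero rig there is no determinant, and for square matrices a one-sided inverse need not be two-sided, so extracting invertibility of the diagonal blocks from invertibility of $\zeta_\A$ is exactly the kind of nontrivial fact about inverse matrices that must be established separately. When $k$ is a ring one can sidestep the induction altogether: Cramer's rule expresses $\mu_\A(a,b)$ as a minor of $\zeta_\A$ divided by the unit $\det\zeta_\A$, and expanding that minor as a signed sum over bijections, each nonzero term traces out a directed path $a \to \cdots \to b$ through objects with nonzero consecutive $\zeta_\A$-values; the full strength of Lemma~\ref{lemma:trans} then forces $\zeta_\A(a,b) \neq 0$, so $\zeta_\A(a,b) = 0$ makes every term vanish. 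This is the cleanest route, and it explains why Lemma~\ref{lemma:trans} is stated for chains of arbitrary length; transporting the argument to rigs is precisely where the difficulty lies.
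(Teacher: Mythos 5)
There is a genuine gap: the theorem is stated over a \emph{rig} $k$ of characteristic zero (fixed at the start of Section~\ref{sec:inf}), and neither of your two routes survives the absence of subtraction. Your main route, induction on the strong components of the preorder $\preceq$, breaks down exactly where you flag it, and in one further place. First, over a rig you cannot appeal to determinants to see that the diagonal blocks $Z'$ and $D$ of an invertible block-triangular matrix are themselves invertible; establishing this is a nontrivial fact of the same kind as the theorem itself, so assuming it is circular in spirit. Second, even granting it, the block-inverse formula you invoke contains the entry $-(Z')^{-1}BD^{-1}$, an additive inverse that need not exist in $k$, so the formula is not available as written. Your fallback via Cramer's rule is correct, but as you say yourself it assumes $k$ is a ring; it proves the ring case (essentially Theorem~4.1 of~\cite{ECC}, as the paper notes in Appendix~\ref{app:zeros}), not the theorem as stated.

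What is missing is the subtraction-free machinery that the paper supplies. The paper's own proof is a two-line reduction: Lemma~\ref{lemma:trans} says that $\zeta_\A$ is a transitive matrix, and Theorem~\ref{thm:zeros} says that for any invertible transitive matrix $Z$ over a rig, $Z_{ij}=0 \implies (Z^{-1})_{ij}=0$. The proof of Theorem~\ref{thm:zeros} does contain, as the claim opening Lemma~\ref{lemma:van-minor}, precisely the cycle argument you found in your Cramer route---a nonzero permutation term with $\sigma(n)=1$ traces out a path from $1$ to $n$ with nonzero consecutive entries, contradicting transitivity when $Z_{1n}=0$---so your core combinatorial insight matches the paper's. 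But to exploit it without signs, the paper splits the determinant and adjugate into positive and negative parts $\detp$, $\detm$, $\adjp$, $\adjm$, proves identities among them that hold over any rig because their coefficients are nonnegative integers (Lemma~\ref{lemma:van-ids}), and then uses a cancellation device: one derives an equation of the form $\lambda = \lambda + (Z^{-1})_{1n}$ in which $\lambda$ is shown to possess an additive inverse (Lemma~\ref{lemma:van-minor}), whence $(Z^{-1})_{1n}=0$. Producing additive inverses for the particular elements involved, rather than assuming them globally, is the idea your proposal lacks; your closing diagnosis that transporting the argument to rigs is where the difficulty lies is accurate, but the theorem requires that transport to be carried out.
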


\begin{proof}
In the terminology of Appendix~\ref{app:zeros}, Lemma~\ref{lemma:trans} states
that $\zeta_\A$ is transitive.  The result follows from
Theorem~\ref{thm:zeros} on inverse matrices.  
\done
\end{proof}

\begin{cor}     \label{cor:no-diff}
Let $\A$ be a finite category.  The coarse zeta function of $\A$ is
invertible in $\iap{k}{\A}$ if and only if it is invertible in $\iac{k}{\A}$.
\done
\end{cor}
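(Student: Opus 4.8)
The plan is to prove the two implications separately, with one being essentially trivial and the other resting on Theorem~\ref{thm:zm-van}. Throughout I will use that $\iap{k}{\A}$ is a unital subalgebra of $\iac{k}{\A}$ sharing the same multiplicative unit $\delta$; this is legitimate because $\delta(a, b) = 0$ whenever $\A(a, b) = \emptyset$ (such a pair has $a \neq b$), so $\delta$ satisfies the support condition of~\eqref{eq:iap-sub}.

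For the easy direction, I would argue that if $\zeta_\A$ has an inverse in $\iap{k}{\A}$, then that same element is a two-sided inverse for $\zeta_\A$ in the larger algebra $\iac{k}{\A}$, since the inclusion is an algebra homomorphism preserving both product and unit. So invertibility in $\iap{k}{\A}$ yields invertibility in $\iac{k}{\A}$ with no further work.

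The substantive direction is the converse. Here I would suppose $\zeta_\A$ is invertible in $\iac{k}{\A}$ and write $\mu_\A = \zeta_\A^{-1} \in \iac{k}{\A}$. The goal is to show that $\mu_\A$ in fact lies in the subalgebra $\iap{k}{\A}$: once that is established, the defining equations $\zeta_\A \conv \mu_\A = \delta = \mu_\A \conv \zeta_\A$, which hold in $\iac{k}{\A}$, hold verbatim in $\iap{k}{\A}$, exhibiting $\zeta_\A$ as invertible there. By the characterization~\eqref{eq:iap-sub}, membership of $\mu_\A$ in $\iap{k}{\A}$ is exactly the implication $\A(a, b) = \emptyset \implies \mu_\A(a, b) = 0$, so this is what I must verify. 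I would get it by chaining two facts. First, because $k$ has characteristic zero (the standing assumption of this section), the value $\zeta_\A(a, b) = \card{\A(a, b)} \in k$ vanishes precisely when $\A(a, b)$ is empty; thus $\A(a, b) = \emptyset \implies \zeta_\A(a, b) = 0$. Second, Theorem~\ref{thm:zm-van} gives $\zeta_\A(a, b) = 0 \implies \mu_\A(a, b) = 0$. Composing the two yields the required support condition on $\mu_\A$.

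The only real obstacle is Theorem~\ref{thm:zm-van} itself, but that is already proved (via the transitivity of $\zeta_\A$ supplied by Lemma~\ref{lemma:trans} together with the inverse-matrix result Theorem~\ref{thm:zeros}). Granting it, this corollary reduces to bookkeeping: the vanishing theorem forces the inverse computed in the big algebra to have exactly the support that places it in the small one, so inverting $\zeta_\A$ in $\iap{k}{\A}$ is no harder than inverting it in $\iac{k}{\A}$. \done
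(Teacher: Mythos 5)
Your proposal is correct and matches the paper's own route: the paper states this corollary as a direct consequence of Theorem~\ref{thm:zm-van}, and your argument---the trivial direction via the unital subalgebra inclusion, and the converse by using the vanishing theorem to show the inverse computed in $\iac{k}{\A}$ satisfies the support condition~\eqref{eq:iap-sub} and hence lies in $\iap{k}{\A}$---is precisely the derivation the paper leaves implicit. (The only quibble: the implication $\A(a,b)=\emptyset \implies \zeta_\A(a,b)=0$ needs no characteristic-zero hypothesis, though that assumption is of course needed upstream in Lemma~\ref{lemma:trans}.)
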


Consider, for example, a finite poset $A$.  The algebra $\iaf{k}{A} =
\iap{k}{A}$ consists of the $k$-valued functions on pairs $(a, b) \in A
\times A$ such that $a \leq b$, whereas the algebra $\iac{k}{A}$ consists
of the $k$-valued functions on \emph{all} pairs $(a, b)$.  When $k$ is a
ring, the zeta function is always invertible in $\iaf{k}{A}$ (and therefore
in $\iac{k}{A}$), by the formula in Example~\ref{eg:poset-fine}.  But for
other rigs, it might not be invertible in $\iaf{k}{A}$, and
Corollary~\ref{cor:no-diff} then implies that it is not invertible in the
larger algebra $\iac{k}{A}$ either.  These and earlier remarks tell us, in
short, that the results on categorical M\"obius inversion presented here
give no more for posets than was already known to Rota \latin{et al}.

When a patch-finite category has coarse M\"obius inversion, its M\"obius
function is determined `locally', that is, patchwise:

\begin{propn}
Let $\A$ be a patch-finite category.  Then $\A$ has coarse M\"obius inversion
if and only if each patch $\patch{\A}{a}{b}$ does.  In that case, the coarse
M\"obius function of each patch $\patch{\A}{a}{b}$ is the restriction of that
of $\A$.
\end{propn}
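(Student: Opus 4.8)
The whole argument rests on two observations. The first is that the convolution in $\iap{k}{\A}$ is local: the value $(\alpha\conv\beta)(a,b) = \sum_{c\in\patch{\A}{a}{b}}\alpha(a,c)\beta(c,b)$ involves only the objects of the single patch $\patch{\A}{a}{b}$. The second is a nesting property of patches, which I would record first: if $x,y\in\patch{\A}{a}{b}$ then $\patch{\A}{x}{y}\subseteq\patch{\A}{a}{b}$, and indeed $\patch{\A}{x}{y}$ coincides with the patch $\patch{(\patch{\A}{a}{b})}{x}{y}$ formed inside $\patch{\A}{a}{b}$. This is immediate by composing maps: membership of $x,y$ supplies maps $a\to x$ and $y\to b$, so any $c$ admitting maps $x\to c\to y$ also admits maps $a\to c\to b$, and fullness of patches gives the identification. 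In particular $\patch{(\patch{\A}{a}{b})}{a}{b}=\patch{\A}{a}{b}$.

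From nesting I would deduce that restriction $r_{a,b}\cln\iap{k}{\A}\to\iap{k}{(\patch{\A}{a}{b})}$, given by $(r_{a,b}\alpha)(x,y)=\alpha(x,y)$, is a unital $k$-algebra homomorphism: at a pair $(x,y)$ of patch objects, the convolution of restrictions sums over $\patch{(\patch{\A}{a}{b})}{x}{y}$ while the restriction of the convolution sums over $\patch{\A}{x}{y}$, and these agree by nesting; preservation of $\delta$ and the equality $r_{a,b}(\zeta_\A)=\zeta_{\patch{\A}{a}{b}}$ are clear since patches are full subcategories. Each patch being finite, Corollary~\ref{cor:no-diff} makes ``the patch has coarse M\"obius inversion'' unambiguously mean that $\zeta_{\patch{\A}{a}{b}}$ is invertible in $\iap{k}{(\patch{\A}{a}{b})}$. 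The forward implication and the final clause then follow at once: if $\mu_\A=\zeta_\A^{-1}$ exists, applying the unital homomorphism $r_{a,b}$ shows $r_{a,b}(\mu_\A)$ is a two-sided inverse of $\zeta_{\patch{\A}{a}{b}}$, so each patch has coarse M\"obius inversion and its M\"obius function $\mu_{\patch{\A}{a}{b}}=r_{a,b}(\mu_\A)$ is the restriction of $\mu_\A$.

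For the converse I would glue. Assuming every $\zeta_{\patch{\A}{a}{b}}$ is invertible, set $\mu(a,b)=\mu_{\patch{\A}{a}{b}}(a,b)$, an element of $\iap{k}{\A}$. Compatibility must be checked: for $x,y\in\patch{\A}{a}{b}$ I claim $\mu_{\patch{\A}{a}{b}}(x,y)=\mu(x,y)$. This is exactly the forward implication applied inside the finite category $\patch{\A}{a}{b}$, whose patch at $(x,y)$ is $\patch{\A}{x}{y}$ by nesting; hence $\mu_{\patch{\A}{a}{b}}$ restricts to $\mu_{\patch{\A}{x}{y}}$, and evaluating at $(x,y)$ gives the claim. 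Thus $\mu$ restricts to $\mu_{\patch{\A}{a}{b}}$ on each patch. Finally $(\zeta_\A\conv\mu)(a,b)$, a sum over $c\in\patch{\A}{a}{b}$, equals the corresponding convolution computed inside that patch, using $\patch{(\patch{\A}{a}{b})}{a}{b}=\patch{\A}{a}{b}$ and the restriction property; that value is $\delta(a,b)=\delta_\A(a,b)$, and symmetrically for $\mu\conv\zeta_\A$. Hence $\mu=\zeta_\A^{-1}$.

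The substantive content is concentrated in the nesting lemma together with the locality of the convolution; once these yield the restriction homomorphism, both directions are bookkeeping. The step I expect to need the most care is the converse's gluing: verifying that the separately defined patch M\"obius functions agree on overlaps and genuinely assemble into a \emph{global} two-sided inverse, rather than merely a patchwise one. Both points are handled by the same two ideas, so the one genuinely delicate input is the identification $\patch{\A}{x}{y}=\patch{(\patch{\A}{a}{b})}{x}{y}$.
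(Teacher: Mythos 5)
Your proof is correct and takes essentially the same route as the paper: both directions rest on the nesting identity $\patch{(\patch{\A}{a}{b})}{x}{y} = \patch{\A}{x}{y}$, with the forward implication given by restricting $\mu_\A$ and the converse by gluing the patch M\"obius functions and applying the forward implication inside each finite patch. Your only real departure is cosmetic: you package the paper's direct verification of the forward direction as the statement that restriction $r_{a,b}\cln \iap{k}{\A} \to \iap{k}{(\patch{\A}{a}{b})}$ is a unital algebra homomorphism, and you route the finite-category case through Corollary~\ref{cor:no-diff} rather than checking invertibility in $\iac{k}{(\patch{\A}{a}{b})}$ directly.
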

This was stated without proof in the case of finite $\A$ as Corollary~4.3
of~\cite{ECC}. 

\begin{proof}
First suppose that $\A$ has coarse M\"obius inversion, with coarse M\"obius
function $\mu \in \iap{k}{\A}$.  Let $a, b \in \A$.  We have to prove that for
all $x, y \in \patch{\A}{a}{b}$,
\begin{equation}        \label{eq:mz-local}
\sum_{z \in \patch{\A}{a}{b}} \mu(x, z) \, \zeta(z, y) 
=
\delta(x, y),
\end{equation}
and similarly with $\mu$ and $\zeta$ interchanged.  By definition of $\mu$,
this equation holds when the sum is taken over all $z \in \patch{\A}{x}{y}$.
But $\patch{\A}{x}{y} \sub \patch{\A}{a}{b}$, and conversely if $z \in
\patch{\A}{a}{b}$ with $\mu(x, z) \zeta(z, y) \neq 0$ then $z \in
\patch{\A}{x}{y}$ (since $\mu \in \iap{k}{\A}$).  This
gives~\eqref{eq:mz-local}.

Conversely, suppose that for each $a, b \in \A$, the patch $\patch{\A}{a}{b}$
has coarse M\"obius inversion, with coarse M\"obius function $\mu_{a, b}$.
Define $\mu \in \iap{k}{\A}$ by
\[
\mu(a, b)
=
\begin{cases}
\mu_{a, b}(a, b)        &
\text{if } \A(a, b) \neq \emptyset   \\
0                       &
\text{otherwise.}
\end{cases}
\]
We prove that $\mu$ is the coarse M\"obius function of $\A$.  Indeed, let $a,
b \in \A$.  Then
\begin{equation}        \label{eq:mz-global}
\sum_{c \in \patch{\A}{a}{b}} \mu(a, c) \, \zeta(c, b)
=
\sum_{c \in \patch{\A}{a}{b}} \mu_{a, c}(a, c) \, \zeta(c, b).
\end{equation}
It is straightforward to show that $\patch{(\patch{\A}{a}{b})}{a}{c} =
\patch{\A}{a}{c}$ whenever $c \in \patch{\A}{a}{b}$, using composition.  So by
the first part of the proof (with $\patch{\A}{a}{b}$ playing the role of
$\A$), the coarse M\"obius function of $\patch{\A}{a}{c}$ is the restriction
of that of $\patch{\A}{a}{b}$.  The right-hand side of~\eqref{eq:mz-global} is
therefore unchanged if we replace $\mu_{a, c}(a, c)$ by $\mu_{a, b}(a, c)$,
and the result follows by definition of $\mu_{a, b}$.  \done
\end{proof}

\begin{examples}
\begin{enumerate}
\item Let $\Dinj$ be the category whose objects are the natural numbers and
whose maps $m \to n$ are the order-preserving injections $\{1, \ldots, m\} \to
\{1, \ldots, n\}$.  For $a, b \in \nat$, the patch $\patch{\Dinj}{a}{b}$ is the
full subcategory on $\{n \in \nat \such a \leq n \leq b\}$.  This is always
finite, so $\Dinj$ is patch-finite.  It has coarse M\"obius inversion:
$\zeta(m, n) = \binom{n}{m}$ and $\mu(m, n) = (-1)^{n - m}\binom{n}{m}$.
(Compare Example~1.2(c) of~\cite{ECC}.)

\item The same is true with surjections in place of injections; now $\zeta(m,
n) = \binom{m - 1}{n - 1}$ and $\mu(m, n) = (-1)^{m - n}\binom{m - 1}{n - 1}$.
\end{enumerate}
\end{examples}

We can now generalize Haigh's comparison theorem and Menni's
corollary:

\begin{thm}     \label{thm:comparison-pf}
Theorem~\ref{thm:comparison-fin} holds when $\A$ is merely patch-finite.
\done
\end{thm}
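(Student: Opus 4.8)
The plan is to reproduce the proof of Theorem~\ref{thm:comparison-fin} essentially verbatim, replacing the coarse incidence algebra $\iac{k}{\A}$ (defined only for finite $\A$) by the patch incidence algebra $\iap{k}{\A}$. The point is that under the patch-finiteness hypothesis, every ingredient of the finite proof is already in place from Section~\ref{sec:inf}. Since a patch-finite category is in particular finely finite (Lemma~\ref{lemma:tfae-cif}), the fine incidence algebra $\iaf{k}{\A}$ and the fine M\"obius function $\mu_\A$ make sense. And Section~\ref{sec:inf} established that $\iap{k}{\A} = \iaf{k}{(\poset{\A})}$ is defined and that the unit functor $\A \to \poset{\A}$ induces an algebra homomorphism $\Sigma\cln \iaf{k}{\A} \to \iap{k}{\A}$ carrying the fine zeta function to the coarse one.

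First I would recall why $\Sigma$ is a \emph{unital} algebra homomorphism, since this is what makes the argument run. The functor $\A \to \poset{\A}$ is bijective on objects, and its fibre over a map $(a, b)$ of $\poset{\A}$ is exactly the homset $\A(a, b)$, which is finite because patch-finiteness forces finite homsets (Lemma~\ref{lemma:tfae-cif}). Thus $\A \to \poset{\A}$ is a bijective-on-objects functor with finite fibres, and Proposition~\ref{propn:fun} applies. Now suppose $\A$ has fine M\"obius inversion, so that $\zeta_\A$ is invertible in $\iaf{k}{\A}$ with inverse $\mu_\A$. A unital algebra homomorphism sends inverses to inverses, so $\Sigma(\mu_\A)$ is inverse to $\Sigma(\zeta_\A)$, which is the coarse zeta function $\zeta_\A \in \iap{k}{\A}$. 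Hence $\zeta_\A$ is invertible in $\iap{k}{\A}$, i.e.\ $\A$ has coarse M\"obius inversion in the sense of Definition~\ref{defn:patch-mi}, with coarse M\"obius function $\Sigma(\mu_\A)$. Reading off the explicit formula~\eqref{eq:Sigma-def} for $\Sigma$ then gives
\[
\mu_\A(a, b) = (\Sigma\mu_\A)(a, b) = \sum_{f \in \A(a, b)} \mu_\A(f),
\]
which is the asserted identity.

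I do not expect any genuine obstacle here: all the substance has been absorbed into the setup of Section~\ref{sec:inf}, where the delicate finiteness bookkeeping needed to define $\iap{k}{\A}$, the homomorphism $\Sigma$, and the coarse zeta function for infinite patch-finite categories was carried out. Once that apparatus is in hand, the comparison theorem is an immediate consequence of the single principle that a morphism of algebras carries the inverse of an element to the inverse of its image --- exactly as in Haigh's original finite case.
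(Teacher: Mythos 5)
Your proof is correct and is exactly the argument the paper intends: Theorem~\ref{thm:comparison-pf} carries no separate proof in the paper precisely because, once Section~\ref{sec:inf} establishes that $\Sigma\cln \iaf{k}{\A} \to \iap{k}{\A}$ is a unital algebra homomorphism sending the fine zeta function to the coarse one, the result follows by applying $\Sigma$ to $\mu_\A = \zeta_\A^{-1}$, just as in the proof of Theorem~\ref{thm:comparison-fin}. Your verification that the unit $\A \to \poset{\A}$ is bijective on objects with finite fibres (so that Proposition~\ref{propn:fun} applies) is the same bookkeeping the paper carries out before Definition~\ref{defn:patch-mi}.
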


\begin{cor}     \label{cor:Menni-pf}
Corollary~\ref{cor:Menni} holds when $\A$ and $\A'$ are merely patch-finite.
\done
\end{cor}

Rota's seminal paper~\cite{Rota} on M\"obius inversion contained two `main
theorems'.  The first, Theorem~1, described the compatibility of M\"obius
functions across a Galois connection between posets.  It was generalized
in~\cite{ECC} to adjunctions between finite categories.  We now generalize it
further, to patch-finite categories.

\begin{propn}   \label{propn:Rota-main}
Let $\A$ and $\B$ be patch-finite categories with coarse M\"obius inversion.
Let $\oppair{\A}{\B}{F}{G}$ be functors with finite fibres, with $F$ left
adjoint to $G$.  Then for all $a \in \A$ and $b \in \B$,
\[
\sum_{a'\cln F(a') = b} \mu_\A(a, a')
=
\sum_{b'\cln G(b') = a} \mu_\B(b', b).
\]
\end{propn}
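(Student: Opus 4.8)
The plan is to rewrite both sides as entries of products of $k$-valued matrices (rectangular, indexed by $\obc{\A}$ and $\obc{\B}$) and then reduce the identity to the one piece of information the hypotheses supply about $F$ and $G$: the adjunction bijection $\B(Fa', b) \iso \A(a', Gb)$, which on cardinalities reads
\[
\zeta_\B(Fa', b) = \zeta_\A(a', Gb) \qquad (a' \in \A,\ b \in \B).
\]
I would write $L(a, b) = \sum_{a'}\mu_\A(a, a')\,\delta(Fa', b)$ and $R(a, b) = \sum_{b'}\delta(Gb', a)\,\mu_\B(b', b)$ for the left- and right-hand sides, where $\delta$ is the Kronecker delta; these are exactly the two displayed sums, and both are genuinely finite because $F$ and $G$ have finite fibres on objects. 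The goal is then $L = R$.

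The heart of the argument will be that $L$ and $R$ agree after left-multiplication by $\zeta_\A$, each collapsing to the indicator of the object map of $F$. On one side, expanding $L$ and using $\zeta_\A \conv \mu_\A = \delta$ in $\iap{k}{\A}$ gives
\[
\sum_{a'}\zeta_\A(a, a')\,L(a', b) = \sum_{a''}\delta(a, a'')\,\delta(Fa'', b) = \delta(Fa, b).
\]
On the other, expanding $R$, collapsing the inner factor $\delta(Gb', a')$ to set $a' = Gb'$, and then applying the cardinality identity followed by $\zeta_\B \conv \mu_\B = \delta$ in $\iap{k}{\B}$ gives
\[
\sum_{a'}\zeta_\A(a, a')\,R(a', b) = \sum_{b'}\zeta_\A(a, Gb')\,\mu_\B(b', b) = \sum_{b'}\zeta_\B(Fa, b')\,\mu_\B(b', b) = \delta(Fa, b).
\]
Hence $\zeta_\A \conv L = \zeta_\A \conv R$, and left-multiplying by $\mu_\A = \zeta_\A^{-1}$ (that is, using associativity of these sums) yields $L = R$.

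The one substantive point, and the step I expect to be the main obstacle, is the finiteness bookkeeping that licenses every sum, interchange, and cancellation above; this is precisely where patch-finiteness and finite fibres earn their keep. For the $L$-computation, a nonzero summand $\zeta_\A(a, a')\,\mu_\A(a', a'')\,\delta(Fa'', b)$ forces $Fa'' = b$ together with $\A(a, a') \neq \emptyset$ and $\A(a', a'') \neq \emptyset$; finite fibres leave only finitely many such $a''$, and for each of these $a'$ lies in the finite patch $\patch{\A}{a}{a''}$, so only finitely many terms are nonzero. For the $R$-computation, a nonzero summand forces $\B(b', b) \neq \emptyset$, and, applying the adjunction bijection to $\A(a, Gb') \neq \emptyset$, also $\B(Fa, b') \neq \emptyset$, so that $b' \in \patch{\B}{Fa}{b}$, which is finite. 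Granting these finiteness facts, everything in sight is a finite-sum identity and the cancellation of $\zeta_\A$ is legitimate; without them, the products would not even be defined.
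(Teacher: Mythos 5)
Your proposal is correct and takes essentially the same route as the paper, whose proof simply runs the finite-category argument of Proposition~4.4 of~\cite{ECC} in the patch-finite setting: both arguments hinge on the adjunction identity $\zeta_\B(Fa',b')=\zeta_\A(a',Gb')$ together with two-sided M\"obius inversion, and your ``multiply by $\zeta_\A$ and cancel'' step unwinds to exactly the double-sum interchange $\sum_{a'}\mu_\A(a,a')\,\delta(Fa',b)=\sum_{a',b'}\mu_\A(a,a')\,\zeta_\B(Fa',b')\,\mu_\B(b',b)=\sum_{b'}\delta(a,Gb')\,\mu_\B(b',b)$ of that proof. Your patch-based support bounds supply precisely the finiteness justification that the paper leaves implicit in its ``exactly as'' citation.
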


\begin{proof}
Exactly as for Proposition~4.4 of~\cite{ECC}.
\done
\end{proof}

\section{M\"obius inversion for enriched categories}
\label{sec:enr}

The theory of fine M\"obius inversion does not seem to generalize to enriched
categories in an obvious way, speaking as it does of \emph{individual}
morphisms.  Coarse M\"obius inversion, however, generalizes easily.  All one
needs is a notion of size for the objects of the enriching category.  In fact,
coarse M\"obius inversion for enriched categories has already been
used extensively in the case of metric spaces
(Example~\ref{egs:enr-mi}(\ref{eg:enr-mi-ms})).  

We confine ourselves to enriched categories with a finite number of objects,
although by imitating the previous section, the theory can also be set up for
infinitely many objects. 

Fix a monoidal category $\V = (\V, \otimes, I)$, a rig $k$, and a monoid
homomorphism
\[
\cde \cln (\obc{\V}/\!\iso, \otimes, I) \to (k, \cdot, 1)
\]
where the domain is the monoid of isomorphism classes of objects of $\V$.

Let $\A$ be a $\V$-category with finitely many objects.  The \demph{coarse
incidence algebra} $\iac{k}{\A}$ is defined exactly as in the non-enriched
case.  The \demph{coarse zeta function} $\zeta_\A \in \iac{k}{\A}$ is
given by 
\[
\zeta_\A(a, b)
=
\cd{\A(a, b)} \in k
\]
($a, b \in \A$).  If $\zeta_\A$ has an inverse in $\iac{k}{\A}$ then $\A$ has
\demph{coarse M\"obius inversion} over $k$, and its \demph{coarse M\"obius
function} is $\mu_\A = \zeta_\A^{-1} \in \iac{k}{\A}$.  

The assumption that $\cde$ is a monoid homomorphism was not needed in order to
make these definitions, but will be used in Proposition~\ref{propn:mu-mult}. 

\begin{examples}        \label{egs:enr-mi}
\begin{enumerate}
\item Taking $\V$ to be the category of finite sets, with $\cd{X} = n\cdot 1
\in k$ when $X$ is an $n$-element set, we recover the definitions for
non-enriched categories. 

\item Take $\V$ to be the category $\Two = (0 \to 1)$ with $\min$ as tensor
product.  Take $k = \integers$, and put $\cd{0} = 0$ and $\cd{1} = 1$.  Then a
$\V$-category is a poset, and every finite $\V$-category has coarse M\"obius
inversion (Example~\ref{eg:poset-coarse}).

\item   \label{eg:enr-mi-ms}
Let $\V$ be the poset $([0, \infty], \geq)$, with monoidal structure
given by addition.  As shown by Lawvere~\cite{LawvMSG}, a $\V$-category is a
generalized metric space.  Put $k = \reals$ and $\cd{x} = e^{-x}$ ($x \in [0,
\infty]$).  This gives a notion of M\"obius inversion for metric
spaces.  Most metric spaces have M\"obius inversion, in a sense made
precise by Proposition~2.2.6(i) of~\cite{MMS}.  For example, all finite
subspaces of Euclidean space do (Theorem~2.5.3 of~\cite{MMS}); more generally,
so do all finite subspaces of $L^p[0, 1]$ whenever $0 < p \leq 2$ (Theorem~3.6
of~\cite{MeckPDM}).  

The \demph{magnitude} of a metric space, defined for finite spaces as
$\sum_{a, b} \mu(a, b)$, is especially significant.  The definition extends 
to a large class of compact metric spaces~\cite{MMS, MeckPDM},
where its geometric meaning begins to emerge: to take the simplest example,
the magnitude of a straight line segment is one plus half its length.  Further
connections with geometric measure are established in \cite{MMS, AMSES,
MeckPDM, WillHCC, WillMSS}.

\item Let $\V$ be the category of finite-dimensional vector spaces with its
usual tensor product, let $k$ be any rig, and put $\cd{X} = (\dim X)\cdot 1
\in k$.  Then we obtain a notion of coarse M\"obius inversion for linear
categories.

\item Let $\V$ be the category of finite categories with Euler
characteristic~\cite{ECC}, made monoidal by cartesian product.  Let $k =
\rationals$, and put $\cd{\scat{X}} = \chi(\scat{X}) \in \rationals$.  (This
is a monoid homomorphism, by Proposition~2.6 of~\cite{ECC}.)  We obtain a
notion of coarse M\"obius inversion for (some) finite 2-categories.

\item Let $\V$ be the category $\FinSet^\nat$ of sequences of finite sets,
with 
$(X \otimes Y)_n = \sum_{p + q = n} X_p \times
Y_q$.  A $\V$-category is a category in which each map $f$ has a degree
$\deg(f) \in \nat$, such that for each $a$, $b$, there are only finitely many
maps $a \to b$ of each degree, and $\deg(g \of f) = \deg(f) + \deg(g)$.  Let
$k = \rationals\Lau{t}$, the ring of formal Laurent series over $\rationals$.
Put $\cd{X} = \sum_{n \in \nat} \card{X_n} \cdot t^n$.  We obtain a notion of
coarse M\"obius inversion for graded categories.

For example, let $G$ be a finite directed graph.  The free category $FG$ on
$G$ need not be finite, but is naturally $\V$-enriched: a map in $FG$ is a
path in $G$, with degree defined as length.  It has M\"obius inversion, as
follows.  Write $G_0$ and $G_1$ for the sets of vertices and edges of $G$,
and, for $a, b \in G$, write $G(a, b)$ for the set of edges from $a$ to
$b$.  Define $\zeta_G \in \iac{k}{\A}$ by $\zeta_G(a, b) = \card{G(a, b)}$.
Then $\zeta_{FG} = \sum_{n \in \nat} (\zeta_G \cdot t)^{\conv n}$ and
$\mu_{FG} = \delta - \zeta_G \cdot t$.  It follows that $\sum_{a, b}
\mu_{FG}(a, b) = \card{G_0} - \card{G_1} \cdot t$.  (For instance, if $G$
has just one vertex and $m$ edges then $FG$ is the free monoid on $m$
generators and $\sum_{a, b} \mu_{FG}(a, b) = 1 - mt$.)  When $t = 1$, this
is the Euler characteristic of $G$; compare Proposition~2.10 of~\cite{ECC}.

\end{enumerate}
\end{examples}

Coarse M\"obius inversion interacts well with tensor product of enriched
categories.  Assume now that $\V$ is symmetric, so that the
tensor product of $\V$-categories is defined.  The following result
generalizes Lemma~1.13(b) of~\cite{ECC}, and is proved using the
multiplicative property of $\cde$.

\begin{propn}   \label{propn:mu-mult}
Let $\A$ and $\B$ be $\V$-categories with finite object-sets.  If $\A$ and
$\B$ have coarse M\"obius inversion over $k$ then so does $\A \otimes \B$,
with
\[
\mu_{\A \otimes \B} ((a, b), (a', b'))
=
\mu_\A(a, a') \mu_\B(b, b')
\]
($a, a' \in \A$, $b, b' \in \B$).
\done
\end{propn}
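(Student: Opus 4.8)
The plan is to verify directly that the function $\mu$ defined by $\mu((a,b),(a',b')) = \mu_\A(a,a')\,\mu_\B(b,b')$ is a two-sided convolution inverse of $\zeta_{\A\otimes\B}$ in $\iac{k}{(\A\otimes\B)}$; since two-sided inverses in an algebra are unique, this simultaneously shows that $\A\otimes\B$ has coarse M\"obius inversion and identifies its M\"obius function. The starting point is the structure of the tensor product of $\V$-categories: $\obc{\A\otimes\B} = \obc{\A}\times\obc{\B}$, with hom-objects $(\A\otimes\B)((a,b),(a',b')) = \A(a,a')\otimes\B(b,b')$. Applying $\cde$ and using that it is a \emph{monoid} homomorphism --- precisely the hypothesis flagged immediately before the statement --- gives
\[
\zeta_{\A\otimes\B}((a,b),(a',b')) = \cd{\A(a,a')\otimes\B(b,b')} = \cd{\A(a,a')}\cdot\cd{\B(b,b')} = \zeta_\A(a,a')\,\zeta_\B(b,b').
\]
Thus both $\zeta_{\A\otimes\B}$ and the candidate $\mu$ are ``external products'' of data on $\A$ and $\B$.

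The key step is an interchange lemma for these external products. For $\alpha\in\iac{k}{\A}$ and $\beta\in\iac{k}{\B}$, let $\alpha\boxtimes\beta\in\iac{k}{(\A\otimes\B)}$ be given by $(\alpha\boxtimes\beta)((a,b),(a',b')) = \alpha(a,a')\,\beta(b,b')$. I claim the pairing $\boxtimes\cln\iac{k}{\A}\times\iac{k}{\B}\to\iac{k}{(\A\otimes\B)}$ is multiplicative, that is, $(\alpha\boxtimes\beta)\conv(\alpha'\boxtimes\beta') = (\alpha\conv\alpha')\boxtimes(\beta\conv\beta')$, and sends the pair $(\delta_\A,\delta_\B)$ of units to the unit $\delta_{\A\otimes\B}$. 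Multiplicativity is a routine interchange of a \emph{finite} double sum: the convolution on $\A\otimes\B$ runs over middle objects $(c,d)\in\obc{\A}\times\obc{\B}$, and because both object-sets are finite,
\[
\sum_{(c,d)} \alpha(a,c)\,\beta(b,d)\,\alpha'(c,a')\,\beta'(d,b') = \Bigl(\sum_{c}\alpha(a,c)\,\alpha'(c,a')\Bigr)\Bigl(\sum_{d}\beta(b,d)\,\beta'(d,b')\Bigr),
\]
which is exactly $\bigl((\alpha\conv\alpha')\boxtimes(\beta\conv\beta')\bigr)((a,b),(a',b'))$; preservation of the unit is immediate from the definition of the Kronecker $\delta$.

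With the lemma established the conclusion is automatic. Feeding the defining relations $\mu_\A\conv\zeta_\A = \delta_\A = \zeta_\A\conv\mu_\A$ and $\mu_\B\conv\zeta_\B = \delta_\B = \zeta_\B\conv\mu_\B$ through $\boxtimes$, and recalling that $\zeta_{\A\otimes\B} = \zeta_\A\boxtimes\zeta_\B$ and $\mu = \mu_\A\boxtimes\mu_\B$, one finds
\[
\mu\conv\zeta_{\A\otimes\B} = (\mu_\A\conv\zeta_\A)\boxtimes(\mu_\B\conv\zeta_\B) = \delta_\A\boxtimes\delta_\B = \delta_{\A\otimes\B},
\]
and symmetrically $\zeta_{\A\otimes\B}\conv\mu = \delta_{\A\otimes\B}$, so $\mu = \zeta_{\A\otimes\B}^{-1}$. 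I do not expect a genuine obstacle: all the content sits in the two displayed identities --- that $\cde$ converts a tensor product of hom-objects into a product in $k$, and that the external product of incidence-algebra elements is multiplicative --- the first being the monoid-homomorphism hypothesis and the second resting only on finiteness of the object-sets. The sole point requiring care is keeping the four-index bookkeeping straight while splitting the double sum.
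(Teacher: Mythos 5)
Your proof is correct and follows exactly the route the paper indicates: the paper omits the details, remarking only that the result ``is proved using the multiplicative property of $\cde$,'' which is precisely the content of your identity $\zeta_{\A \otimes \B} = \zeta_\A \boxtimes \zeta_\B$ combined with the finite-sum interchange showing $\boxtimes$ is multiplicative for convolution. Your write-up is a sound fleshing-out of that intended argument, with no gaps.
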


There is a similar result on coproducts, generalizing Lemma~1.13(a)
of~\cite{ECC}.  (Compare also Proposition~1.4.4 of~\cite{MMS}.)  Our
generalization of Rota's main theorem (Proposition~\ref{propn:Rota-main}) also
extends easily to the enriched setting.

\section{Functoriality revisited}
\label{sec:fun-re}

The incidence algebra construction is functorial in two ways: covariant and
contravariant.  We have already used the covariant functoriality.  Here we
examine its contravariant counterpart.  We then show that the two types of
functoriality interact well enough that they can, in fact, be unified into a
single functor.  

A functor $F\cln \A \to \B$ has \demph{unique lifting of factorizations}, or
is \demph{ULF}, if whenever $f$ is a map in $\A$ and $Ff = g_2 \of g_1$ in
$\B$, there are unique maps $f_1, f_2$ in $\A$ such that $f_2 \of f_1 = f$,
$Ff_1 = g_1$ and $Ff_2 = g_2$:
\[
\begin{array}{c}
\xymatrix{
        &\cdot \ar@{.>}[dr]^{f_2}       &       \\
a \ar@{.>}[ur]^{f_1} \ar[rr]_f  &       &b
}
\end{array}
\qquad
\stackrel{F}{\longmapsto}
\qquad
\begin{array}{c}
\xymatrix{
                                &\cdot \ar[dr]^{g_2}   &       \\
Fa \ar[ur]^{g_1} \ar[rr]_{Ff}   &                       &Fb.
}
\end{array}
\]
This definition is implicit in Th\'eor\`eme~4.1 of~\cite{CLL}, and is made
explicit in Section~4 of~\cite{LawvTCS}.  Appendix~\ref{app:pbh}
places the ULF concept into an abstract context.

Let $F\cln \A \to \B$ be a functor between finely finite categories.  For each
rig $k$, there is an induced $k$-linear map
\[
F^*\cln \iaf{k}{\B} \to \iaf{k}{\A}
\]
defined by 
\[
(F^* \beta)(f) = \beta(Ff)
\]
where $\beta \in \iaf{k}{\B}$ and $f$ is a map in $\A$.  It is a fact that
$F$ is ULF if and only if $F^*$ is an algebra homomorphism for all rings
$k$: again, this is implicit in Th\'eor\`eme~4.1 of~\cite{CLL}, and it is made
explicit in Theorem~9.21 of~\cite{LaMe}.  Our Proposition~\ref{propn:fun} is
a covariant companion of this fact.

For example, whenever $X$ is an object of a category $\cat{C}$, the
forgetful functor $X/\cat{C} \to \cat{C}$ is ULF.  Lawvere (Section~4
of~\cite{LawvTCS}) and Lawvere and Menni (Example~9.22 of~\cite{LaMe})
point out the following.  When $\cat{C}$ is the additive monoid of natural
numbers, viewed as a one-object category $(\nat, +, 0)$, this is the
functor $(\nat, \leq) \to (\nat, +, 0)$ sending the map $m \to n$ in
$(\nat, \leq)$ to the map $n - m$ in $(\nat, +, 0)$, whenever $m \leq n$.
It induces an algebra homomorphism $\iaf{k}{(\nat, +, 0)} \to
\iaf{k}{(\nat, \leq)}$, thus relating the monoid M\"obius inversion of
Cartier and Foata~\cite{CaFo} to the poset M\"obius inversion of Rota et
al.

The class of ULF functors is closed under composition, so there is a category
$\Catulf$ of finely finite categories and ULF functors.  There is then a
functor $\Catulf^\op \to k\hyph\Alg$ defined by $\A \mapsto \iaf{k}{\A}$ and
$F \mapsto F^*$.

The covariant and contravariant constructions are linked by a result with a
strong formal resemblance to the Beck--Chevalley theorem.

\begin{thm}   \label{thm:BC}
Let
\[
\xymatrix{
\scat{D} \ar[r]^{F'} \ar[d]_{G'}        &
\scat{B} \ar[d]^G       \\
\scat{A} \ar[r]_F       &
\scat{C}        
}
\]
be a pullback square in $\Cat$.  Suppose that all four categories are finely
finite, $F$ is ULF, and $G$ is bijective on objects and has finite fibres.
Then $F'$ is ULF, $G'$ is bijective on objects and has finite fibres, and the
square
\[
\xymatrix{
\iaf{k}{\scat{D}} \ar[r]^{F'_!} &
\iaf{k}{\scat{B}}       \\
\iaf{k}{\scat{A}} \ar[r]_{F_!} \ar[u]^{G'^*}    &
\iaf{k}{\scat{C}} \ar[u]_{G^*}
}
\]
commutes for all rigs $k$.
\end{thm}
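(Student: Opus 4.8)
The plan is to work throughout with the explicit componentwise description of the pullback. Since pullbacks in $\Cat$ are computed componentwise on objects and on arrows, I may take $\scat{D}$ to have $\obc{\scat{D}} = \obc{\scat{A}} \times_{\obc{\scat{C}}} \obc{\scat{B}}$ and $\arr{\scat{D}} = \arr{\scat{A}} \times_{\arr{\scat{C}}} \arr{\scat{B}}$, with composition and identities taken componentwise; under this description $F'$ and $G'$ are the two projections. With this in hand each of the three assertions becomes a base-change (pullback-stability) statement, and the only real content is that the relevant property is stable under pullback.

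First I would prove that $F'$ is ULF. A map of $\scat{D}$ is a pair $(p, q)$ with $p \in \arr{\scat{A}}$, $q \in \arr{\scat{B}}$ and $Fp = Gq$, and $F'(p, q) = q$. Given a factorization $q = g_2 \of g_1$ in $\scat{B}$, a lift to a factorization of $(p, q)$ in $\scat{D}$ is exactly a factorization $p = p_2 \of p_1$ in $\scat{A}$ with $Fp_i = Gg_i$ (the $\scat{B}$-components being forced to equal $g_1, g_2$, and the intermediate object then automatically lying in $\scat{D}$). Since $Fp = Gq = Gg_2 \of Gg_1$ and $F$ is ULF, there is a unique such factorization of $p$, so the lift exists and is unique. (Equivalently, one may recall that a functor is ULF iff its comparison square from composable pairs to arrows is a pullback, and deduce the statement for $F'$ from the pasting law for pullbacks, as in Appendix~\ref{app:pbh}; the direct argument above is self-contained.)

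Next I would treat $G'$. On objects, $\obc{G'}$ is the pullback of the bijection $\obc{G}$ along $\obc{F}$, and a pullback of a bijection is a bijection, so $G'$ is bijective on objects. For fibres, the $G'$-fibre over a map $p \in \arr{\scat{A}}$ is $\{(p, q) : Gq = Fp\}$, which the projection identifies with the $G$-fibre over $Fp$; since $G$ has finite fibres, so does $G'$. The same bookkeeping shows that $F'$ inherits finite fibres from $F$, the $F'$-fibre over $g \in \arr{\scat{B}}$ being the $F$-fibre over $Gg$; this is what guarantees that $F'_!$ is defined, and here I would note that $F$ must have finite fibres in order for $F_!$ and $F'_!$ to be defined at all.

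Finally, for commutativity of the algebra square I would evaluate both composites on an arbitrary $\alpha \in \iaf{k}{\scat{A}}$ and an arbitrary map $g \in \arr{\scat{B}}$. Unwinding the definitions gives
\[
(F'_! \, G'^*\alpha)(g) = \sum_{f\cln F'f = g} \alpha(G'f), \qquad (G^* F_! \alpha)(g) = (F_!\alpha)(Gg) = \sum_{e\cln Fe = Gg} \alpha(e).
\]
The assignment $f = (p, q) \mapsto p = G'f$ is a bijection from $\{f \in \arr{\scat{D}} : F'f = g\}$ to $\{e \in \arr{\scat{A}} : Fe = Gg\}$, with inverse $e \mapsto (e, g)$ (well-defined since $Fe = Gg$), and it matches the summands term by term; both index sets are finite by the previous paragraph. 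Hence the two sides agree. I do not expect any genuine obstacle: the statement is formal once the componentwise description of $\scat{D}$ is fixed, the only points requiring care being the pullback-stability of the ULF condition and the finiteness bookkeeping that makes $F_!$, $F'_!$ defined and the displayed sums finite.
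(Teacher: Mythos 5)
Your proof is correct and follows essentially the same route as the paper's: pullback-stability of the ULF condition (which the paper likewise says can be checked directly or deduced from Proposition~\ref{propn:cart-implies-stable}), routine bookkeeping for $G'$, and the identical term-by-term identification of the two sums $\sum_{h\cln F'h = g}\alpha(G'h) = \sum_{f\cln Ff = Gg}\alpha(f)$ via the pullback property of $\scat{D}$. Your side remark that $F$ must have finite fibres for $F_!$ and $F'_!$ to be defined at all is a point of genuine extra care: that hypothesis does not appear in the theorem's statement and is not implied by it (a functor can be ULF with infinite fibres, e.g.\ an infinite discrete category mapping to $\One$), and the paper's proof passes over this silently.
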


\begin{proof}
That $F'$ is ULF follows from the fact that the pullback of an ULF functor
along an arbitrary functor is again ULF, which can be checked directly and
also follows from Proposition~\ref{propn:cart-implies-stable}.  That $G'$ is
bijective on objects and has finite fibres is straightforward.  Now let $\alpha
\in \iaf{k}{\A}$ and $g \in \arr{\B}$.  We have
\[
(G^* F_! \alpha)(g)
=
(F_! \alpha)(Gg)
=
\sum_{f \in \arr{\A}\cln Ff = Gg} \alpha(f).
\]
On the other hand, 
\[
(F'_! G'^* \alpha)(g)
=
\sum_{h \in \arr{\scat{D}}\cln F'h = g} \alpha(G' h)
=
\sum_{f \in \arr{\A}\cln Ff = Gg} \alpha(f)
\]
since the square is a pullback.
\done
\end{proof}

We can now unify the two types of functoriality for incidence algebras.  

The bicategory of spans in $\Cat$~\cite{BenIB} has a sub-bicategory
$\Cat_!^*$, defined as follows.  The objects are the finely finite categories.
The 1-cells from $\A$ to $\B$ are the spans
\begin{equation}        \label{eq:span}
\xymatrix{
\A &&
\scat{C} \ar[ll]_F^{\text{ULF}} \ar[rr]^G_{\text{BO, FF}} &&
\B
}
\end{equation}
in which $F$ is ULF and $G$ is bijective on objects and has
finite fibres.  The 2-cells are the isomorphisms.  We may also view
$k\hyph\Alg$ as a bicategory, with only identity 2-cells.

\begin{cor}
There is a strict functor $\Cat_!^* \to k\hyph\Alg$ defined on objects by $k
\mapsto \iaf{k}{\A}$ and on 1-cells by sending~\eqref{eq:span} to the
composite homomorphism $k\A \toby{F^*} k\scat{C} \toby{G_!} k\B$.
\end{cor}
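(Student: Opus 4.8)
The plan is to verify that the proposed assignment genuinely defines a strict functor of bicategories, which amounts to three things: that it is well-defined on 1-cells, that it respects 2-cells (the isomorphisms of spans), and that it preserves composition of 1-cells strictly, as well as identities. The functoriality in each variable separately is already in hand: $F \mapsto F^*$ gives the contravariant functor $\Catulf^\op \to k\hyph\Alg$, and $G \mapsto G_!$ gives the covariant functor $\Catboff \to k\hyph\Alg$. So each span \eqref{eq:span} is sent to a legitimate composite of algebra homomorphisms $G_! \of F^*$, and the only real content is to check that these composites compose correctly under span composition.

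First I would recall that composition of spans in the bicategory of spans is computed by pullback: given composable 1-cells $\A \leftarrow \scat{C} \to \B$ and $\B \leftarrow \scat{C}' \to \E$, their composite is formed from the pullback of the two legs over $\B$. I would then need to confirm that this composite is again a 1-cell of $\Cat_!^*$, i.e.\ that its left leg is ULF and its right leg is bijective-on-objects with finite fibres. The left leg of the composite is $F \of (\text{pullback of } F')$ where $F'$ is the ULF leg of the second span; since ULF functors are closed under composition and under pullback (as noted in the proof of Theorem~\ref{thm:BC}), this leg is ULF. Dually, the right leg is a composite of bijective-on-objects finite-fibre functors, and these properties are likewise stable under composition and pullback, so the composite span is a valid 1-cell. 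All four categories involved remain finely finite because finite fibres over a finely finite category force the domain to be finely finite.

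The heart of the argument, and the step I expect to be the main obstacle, is showing that the functor preserves composition on the nose. Writing the composite span via the pullback square, preservation of composition reduces exactly to the commutativity of the Beck--Chevalley square of Theorem~\ref{thm:BC}. Concretely, if the pullback is
\[
\xymatrix{
\scat{D} \ar[r]^{\tilde{F}} \ar[d]_{\tilde{G}} & \scat{C}' \ar[d]^{G} \\
\scat{C} \ar[r]_{F'} & \B
}
\]
then the image of the composite 1-cell is $(G'_! \of \tilde{G}_!) \of (\tilde{F}^* \of F^*)$, whereas the composite of the two images is $G'_! \of (F'^* \of G_!) \of F^*$ after inserting the identity. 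These agree precisely when $\tilde{G}_! \of \tilde{F}^* = F'^* \of G_!$, which is the conclusion of Theorem~\ref{thm:BC} (with $F'$ in the role of the ULF functor $F$ there and $G$ in the role of the bijective-on-objects finite-fibre functor). Thus the Beck--Chevalley theorem is exactly what makes the two functorialities splice together without any nontrivial 2-cell correction, yielding a \emph{strict} functor. The remaining checks---that identity spans go to identity homomorphisms, and that the span isomorphisms constituting the 2-cells are sent to the identity 2-cells of $k\hyph\Alg$ (automatic, since that bicategory has only identity 2-cells, so one need only confirm well-definedness on isomorphism classes)---are routine and follow from the separate functoriality of $(-)^*$ and $(-)_!$ on isomorphisms.
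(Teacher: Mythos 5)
Your proposal is correct and takes essentially the same route as the paper, whose entire proof reads: Theorem~\ref{thm:BC} implies that composition is preserved, and the rest is trivial --- your write-up simply makes explicit the routine checks the paper leaves out (closure of the two classes of span legs under pullback and composition, fine finiteness of the pullback, identities, and well-definedness on isomorphism classes of spans). One cosmetic slip: in your displayed pullback square the positions of $\scat{C}$ and $\scat{C}'$ are interchanged relative to the formulas and the verbal role-matching that follow it, but the argument those formulas carry is the intended one.
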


\begin{proof}
Theorem~\ref{thm:BC} implies that composition is preserved, and the rest is
trivial. 
\done
\end{proof}

A cruder version of the same result uses the \emph{category} $\Cat_!^*$ whose
maps are the \emph{isomorphism classes} of spans~\eqref{eq:span}.  We still
obtain a functor $\Cat_!^* \to k\hyph\Alg$.

\section{The M\"obius categories of Leroux}
\label{sec:McL}

In the work of Leroux et al.~\cite{CLL, Lero}, a central role is played by the
`M\"obius categories'.  (Beware that Haigh~\cite{Haig} uses the same term
differently.)  A category is \demph{M\"obius} if it is finely finite and
satisfies the equivalent conditions of the following theorem.

\begin{thm}[Content--Lemay--Leroux]       \label{thm:tfae-cll}
Let $\A$ be a finely finite category.  The following conditions on $\A$ are
equivalent:
\begin{enumerate}
\item   \label{part:tfae-cll-iii}
Every isomorphism or idempotent in $\A$ is an identity.

\item   \label{part:tfae-cll-fin}
Each map in $\A$ can be expressed as a composite of a finite sequence of
non-identity maps in only finitely many ways.

\item   \label{part:tfae-cll-mi}
For all rings $k$, an element $\alpha \in \iaf{k}{\A}$ is invertible if
and only if $\alpha(1_a) \in k$ is invertible for all $a \in \A$.
\end{enumerate}
\end{thm}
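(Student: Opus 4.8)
The plan is to close a cycle linking the three conditions, establishing (\ref{part:tfae-cll-iii})$\,\Leftrightarrow\,$(\ref{part:tfae-cll-fin}) first and then relating both to~(\ref{part:tfae-cll-mi}). The implication (\ref{part:tfae-cll-fin})$\,\implies\,$(\ref{part:tfae-cll-iii}) is the easy one, proved by contraposition: a nontrivial idempotent $e$ yields the distinct factorizations $e,\ e\of e,\ e\of e\of e,\ \ldots$ into non-identity maps, and a nontrivial isomorphism $u$ with inverse $v$ yields $u,\ u\of v\of u,\ u\of v\of u\of v\of u,\ \ldots$ (of lengths $1,3,5,\ldots$); either way condition~(\ref{part:tfae-cll-fin}) fails.

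The heart of the argument, and the step I expect to be the main obstacle, is (\ref{part:tfae-cll-iii})$\,\implies\,$(\ref{part:tfae-cll-fin}). Fixing a map $f$, I would first note that at each fixed length the factorizations are finite in number, by induction on the length using fine finiteness; it therefore suffices to bound the length of factorizations of $f$ into non-identity maps. The key finiteness input is that the set $R(f)$ of right-factors of $f$ (maps $g$ with $f = h\of g$ for some $h$) is finite, again by fine finiteness. Given a factorization $f = f_n\of\cdots\of f_1$ into non-identity maps, the partial composites $g_i = f_i\of\cdots\of f_1$ all lie in $R(f)$ together with the source identity; so once $n$ exceeds $\card{R(f)}$, two of them coincide, say $g_i = g_j$ with $i<j$. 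Writing $w = f_j\of\cdots\of f_{i+1}$, this gives an endomorphism $w$ with $w\of g_i = g_i$. Now fine finiteness applied to $g_i$ shows the factorizations of $g_i$ of the form $w^k\of g_i$ cannot all be distinct, so $w^{k} = w^{k+m}$ for some $k$ and some $m \geq 1$; the powers of $w$ thus form a finite subsemigroup of $\A(c,c)$, which contains an idempotent $w^t$. By~(\ref{part:tfae-cll-iii}) this $w^t$ is an identity, whence $w$ is invertible and so (again by~(\ref{part:tfae-cll-iii})) an identity; but then the first factor $f_{i+1}$ is split monic, producing an idempotent that by~(\ref{part:tfae-cll-iii}) is an identity, forcing $f_{i+1}$ itself to be an identity---a contradiction. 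This bounds the length and yields~(\ref{part:tfae-cll-fin}).

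With (\ref{part:tfae-cll-iii}) and (\ref{part:tfae-cll-fin}) now equivalent, I would deduce (\ref{part:tfae-cll-mi}) assuming both. For the `only if' half I would show that evaluation at identities, $\alpha \mapsto (\alpha(1_a))_{a\in\A}$, is a ring homomorphism $\iaf{k}{\A} \to \prod_{a\in\A}k$: the only two-part factorization of any $1_a$ is the trivial one $1_a = 1_a\of 1_a$, since a factorization $1_a = h\of g$ makes $g\of h$ idempotent and hence (by~(\ref{part:tfae-cll-iii})) an identity, so $g,h$ are mutually inverse isomorphisms and thus identities. A homomorphism carries units to units, giving `only if'. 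For the `if' half, suppose each $\alpha(1_a)$ is invertible in $k$; split $\alpha = d + \nu$ into its diagonal part $d$ (supported on identities, and invertible by the same trivial-factorization fact) and its off-diagonal part $\nu$ (supported on non-identity maps). Then $\alpha = d\conv(\delta + d^{-1}\conv\nu)$, so it suffices to invert $\delta + \mu$ where $\mu = d^{-1}\conv\nu$ is supported on non-identity maps. The Neumann series $\sum_{k\geq 0}(-\mu)^{k}$ (convolution powers) does this: by~(\ref{part:tfae-cll-fin}), only finitely many factorizations of any given map into non-identity maps exist, so $\mu^{k}$ vanishes at that map for all large $k$, making the series a well-defined two-sided inverse in $\iaf{k}{\A}$. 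This is where rings rather than rigs are needed, both for the minus signs here and for the elements below.

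Finally, for (\ref{part:tfae-cll-mi})$\,\implies\,$(\ref{part:tfae-cll-iii}) I would argue contrapositively, exhibiting for each failure of~(\ref{part:tfae-cll-iii}) an element of $\iaf{k}{\A}$ (over, say, $k=\rationals$) that is a non-unit yet has every diagonal value invertible. A nontrivial idempotent $e$ gives one immediately: its indicator $\gamma$ satisfies $\gamma\conv\gamma = \gamma$, so $\delta - \gamma$ has unit diagonal but right-annihilates the nonzero $\gamma$, hence is a non-unit. For a nontrivial isomorphism $u$, fine finiteness forces any automorphism to have finite order---an infinite-order automorphism would give infinitely many factorizations $1 = u^{-k}\of u^{k}$---and one then constructs an explicit zero-divisor of unit diagonal from the indicators of $u$ and its inverse (in the two-object case) or from the powers of $u$ (in the finite-order endomorphism case). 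In every case~(\ref{part:tfae-cll-mi}) fails, closing the cycle.
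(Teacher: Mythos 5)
Your proposal is correct, but it takes a genuinely different route from the paper's --- in fact it does far more work. The paper does not prove the three-way equivalence itself: it cites Th\'eor\`eme~1.1 of~\cite{CLL}, which already establishes the equivalence of~(\ref{part:tfae-cll-fin}), (\ref{part:tfae-cll-mi}) and the conjunction of two conditions: (a)~if $g \of f = 1_a$ then $g = f = 1_a$, and (b)~if $h^m = h^n$ with $m \neq n$ then $h$ is an identity. The paper's entire proof is the bridge showing that~(\ref{part:tfae-cll-iii}) is equivalent to (a)$\,\wedge\,$(b), and that bridge consists of exactly the micro-arguments you deploy repeatedly inside your own proof: a one-sided inverse yields an idempotent, which by~(\ref{part:tfae-cll-iii}) is an identity, making both factors isomorphisms and hence identities; and $h^n = h^{n+k}$ yields an idempotent power of $h$. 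What you have done, in effect, is reprove the cited theorem of Content--Lemay--Leroux from scratch: the pigeonhole argument on the finite set of right factors, together with the idempotent in the finite cyclic semigroup generated by $w$, gives (\ref{part:tfae-cll-iii})$\implies$(\ref{part:tfae-cll-fin}); the splitting $\alpha = d\conv(\delta + d^{-1}\conv\nu)$ and the pointwise-finite Neumann series give~(\ref{part:tfae-cll-mi}); and your explicit non-units over $\rationals$ with invertible diagonal --- from the indicator of a nontrivial idempotent, from averaging the indicators of the powers of a finite-order automorphism, or from averaging over $\{1_a, 1_b, u, u^{-1}\}$ for an isomorphism $u \cln a \to b$ with $a \neq b$ --- give (\ref{part:tfae-cll-mi})$\implies$(\ref{part:tfae-cll-iii}). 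I checked all the steps, including the sketched zero-divisor constructions (which are the only places you leave details to the reader) and the crux (\ref{part:tfae-cll-iii})$\implies$(\ref{part:tfae-cll-fin}), and they work. Your route buys a fully self-contained proof; the paper's route buys brevity and isolates exactly what is new relative to~\cite{CLL}, namely the observation that conditions (a) and (b) can be repackaged as the single condition~(\ref{part:tfae-cll-iii}).
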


\begin{proof}
This is nearly Th\'eor\`eme~1.1 of~\cite{CLL}, except that where we have
condition~(\ref{part:tfae-cll-iii}), they have the conjunction of two
conditions: (a)~if $g \of f = 1_a$ in $\A$ then $g = f = 1_a$, and (b)~if $h$
is an endomorphism in $\A$ with $h^m = h^n$ for some natural numbers $m \neq
n$ then $h$ is an identity.

Certainly~(a) and~(b) together imply~(\ref{part:tfae-cll-iii}).  The converse
does not seem to have been stated completely explicitly before, although
essentially it goes back to~\cite{Lero} (and it is proved for finite
categories in Proposition~3.5 of~\cite{LaMe}).  Suppose
that~(\ref{part:tfae-cll-iii}) holds.  For~(a), if $g \of f = 1_a$ in $\A$
then $f\of g$ is idempotent, so $f \of g$ is an identity, so $f$ and $g$ are
isomorphisms and therefore identities.  For~(b), suppose that $h^n =
h^{n + k}$ for some $n, k \geq 1$; then $h^{nk}$ is idempotent, so $h^{nk} =
1$, which by~(a) implies that $h$ is an identity.  \done
\end{proof}

Being M\"obius is a strictly stronger condition than having fine M\"obius
inversion over all rings.  It is stronger by~(\ref{part:tfae-cll-mi}), and
\emph{strictly} stronger by the following example.

\begin{example}
Let $\A$ be the category freely generated by objects and maps
$\oppair{a}{b}{s}{i}$ subject to $s i = 1_b$.  It is easily shown that $\A$
has fine M\"obius inversion over all rings (with $\mu(1_a) = 1$, $\mu(1_b) =
2$, $\mu(s) = \mu(i) = -1$, and $\mu(i s) = 0$).  But $\A$ is not M\"obius,
since it contains the nontrivial idempotent $is$.
\end{example}

This example can be viewed as follows.  By
Theorem~\ref{thm:tfae-cll}(\ref{part:tfae-cll-fin}), every subcategory (full
or not) of a M\"obius category is M\"obius.  In particular, every subcategory
of a M\"obius category has fine M\"obius inversion over all rings.  However,
$\A$ contains the subcategory $\B$ consisting of the object $a$, the identity
$1_a$, and the idempotent $e = is \neq 1_a$, which does \emph{not} have fine
M\"obius inversion over all rings: the M\"obius function would have to
satisfy $2\mu_\B(e) = -1$.

So, having a subcategory without fine M\"obius inversion is an obstruction to
being M\"obius.  The main result of this section is that it is the \emph{only}
obstruction.  

\begin{thm}     \label{thm:mc-mi}
Let $\A$ be a finely finite category.  The following conditions on $\A$ are
equivalent: 
\begin{enumerate}
\item   \label{part:mc-mi-mc}
$\A$ is M\"obius
\item   \label{part:mc-mi-sub-all}
every subcategory of $\A$ has fine M\"obius inversion over every ring
\item   \label{part:mc-mi-sub-Z}
every subcategory of $\A$ has fine M\"obius inversion over $\integers$.
\end{enumerate}
\end{thm}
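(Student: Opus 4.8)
The plan is to prove the cycle of implications $(\ref{part:mc-mi-mc}) \implies (\ref{part:mc-mi-sub-all}) \implies (\ref{part:mc-mi-sub-Z}) \implies (\ref{part:mc-mi-mc})$, since the first two implications are essentially free and the entire content lies in the last one. For $(\ref{part:mc-mi-mc}) \implies (\ref{part:mc-mi-sub-all})$: if $\A$ is M\"obius, then by Theorem~\ref{thm:tfae-cll}(\ref{part:tfae-cll-fin}) every subcategory $\B$ (full or not) is again M\"obius, since condition~(\ref{part:tfae-cll-fin}) is visibly inherited by subcategories (fewer maps can only mean fewer factorizations). Being M\"obius, each such $\B$ has fine M\"obius inversion over every ring by Theorem~\ref{thm:tfae-cll}(\ref{part:tfae-cll-mi}) applied to $\alpha = \zeta_\B$, whose values at identities are all $1$. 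The implication $(\ref{part:mc-mi-sub-all}) \implies (\ref{part:mc-mi-sub-Z})$ is immediate, as $\integers$ is a ring.

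The real work is $(\ref{part:mc-mi-sub-Z}) \implies (\ref{part:mc-mi-mc})$, and I would prove the contrapositive: if $\A$ is \emph{not} M\"obius, I must exhibit a subcategory with no fine M\"obius inversion over $\integers$. By Theorem~\ref{thm:tfae-cll}(\ref{part:tfae-cll-iii}), failure of the M\"obius condition means $\A$ contains a non-identity map that is either an isomorphism or an idempotent. The strategy, guided by the worked example preceding the theorem, is to isolate a small offending subcategory generated by such a map and show directly that its fine zeta function cannot be inverted over $\integers$.

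I would split into the two cases. If $\A$ contains a non-identity idempotent $e\cln a \to a$ with $e^2 = e$, take $\B$ to be the subcategory on the object $a$ with maps $\{1_a, e\}$. Then $\iaf{\integers}{\B}$ is spanned by $\delta$ and the function supported on $e$; the equation $\mu \conv \zeta = \delta$ forces, on evaluating at $e$ and using $e \of e = e$ together with $e \of 1_a = 1_a \of e = e$, a relation of the form $2\mu(e) = -1$ (exactly as in the displayed example), which has no solution in $\integers$. If instead $\A$ contains a non-identity isomorphism $u\cln a \to b$, I would first use the argument inside the proof of Theorem~\ref{thm:tfae-cll} in reverse: the presence of a nontrivial isomorphism produces a nontrivial idempotent, since part~(a) of that proof shows that a one-sided inverse relation $g \of f = 1$ yields the idempotent $f \of g$. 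Thus the isomorphism case reduces to the idempotent case: either $u$ already closes up (with its inverse) into a subcategory containing a nontrivial idempotent, or, if $a \neq b$ and $u$ is a genuine isomorphism with inverse $v$, the subcategory generated by $u,v$ contains $1_a, 1_b, u, v$ and one checks that its fine zeta function is non-invertible over $\integers$ by a direct two-by-two style computation.

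The main obstacle is handling the isomorphism case cleanly, because a non-identity isomorphism between \emph{distinct} objects need not itself be idempotent, so one cannot simply quote the idempotent computation. The cleanest route is to observe that the subcategory generated by $u$ and $u^{-1}$ is a nontrivial finite groupoid-like piece (possibly a full two-element group if $a = b$, or the ``walking isomorphism'' if $a \neq b$), and then invoke either the idempotent reduction above or, in the $a \neq b$ case, note that the resulting incidence algebra has a zeta function whose determinant over $\integers$ is not a unit. I expect the bookkeeping of which generated maps actually appear (and verifying finiteness of factorizations so that the subcategory is itself finely finite and $\iaf{\integers}{\B}$ is defined) to be the fiddly part; reducing everything to the single clean idempotent obstruction $2\mu(e) = -1$ is what keeps the argument short.
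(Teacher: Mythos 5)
Your implications (\ref{part:mc-mi-mc}) $\implies$ (\ref{part:mc-mi-sub-all}) $\implies$ (\ref{part:mc-mi-sub-Z}) match the paper's treatment (which disposes of them in the discussion preceding the theorem), and your idempotent case and your ``walking isomorphism'' computation are both correct. But there is a genuine gap in the isomorphism case of (\ref{part:mc-mi-sub-Z}) $\implies$ (\ref{part:mc-mi-mc}): the claim that ``the presence of a nontrivial isomorphism produces a nontrivial idempotent'' is false. Part~(a) of the proof of Theorem~\ref{thm:tfae-cll} produces the idempotent $f \of g$ from a \emph{one-sided} inverse relation $g \of f = 1$; when $u$ is an isomorphism with two-sided inverse $v$, the idempotents so produced are $u \of v = 1_b$ and $v \of u = 1_a$, i.e.\ identities, so nothing nontrivial is obtained. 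Concretely, a non-identity automorphism $u \cln a \to a$ generates a subcategory that is a nontrivial finite group (finite because subcategories of finely finite categories are finely finite, and a finely finite group is finite), and a group contains \emph{no} nontrivial idempotents. This case therefore falls through both branches of your dichotomy: it cannot be reduced to the idempotent obstruction, and your direct ``two-by-two'' computation is explicitly restricted to $a \neq b$.

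The hole can be patched: for a nontrivial automorphism, the generated subcategory is a nontrivial finite group $G$, and no such group has fine M\"obius inversion over $\integers$ (Example~\ref{eg:group-fine}; in the group algebra, every $g \in G$ has the same multiset of first factors, so $(\mu \conv \zeta)(g) = \sum_{h \in G} \mu(h)$ is independent of $g$ and cannot equal $\delta$). The paper avoids the case split altogether with a more uniform argument, which moreover needs no proper subcategory for this step: given any isomorphism $i \cln a \to b$, the finitely many factorizations $g_r \of f_r$ of $1_a$ biject with the factorizations $(i g_r) \of f_r$ of $i$, so applying fine M\"obius inversion of $\A$ itself over $\integers$ gives $\delta(i) = \sum_r \mu_\A(f_r) = \delta(1_a) = 1$, forcing $i$ to be an identity. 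Your walking-isomorphism computation is exactly this counting argument in a special case; run it in $\A$ itself and the automorphism case comes for free.
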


\begin{proof}
We have just seen that~(\ref{part:mc-mi-mc}) $\implies$
(\ref{part:mc-mi-sub-all}), and~(\ref{part:mc-mi-sub-all}) $\implies$
(\ref{part:mc-mi-sub-Z}) trivially.  Now suppose~(\ref{part:mc-mi-sub-Z}).  We
prove condition~(\ref{part:tfae-cll-iii}) of Theorem~\ref{thm:tfae-cll}.

Let $i\cln a \to b$ be an isomorphism in $\A$.  Since $\A$ is finely finite,
$1_a$ has only finitely many factorizations into two factors; write them as
$g_1 \of f_1, \ldots, g_n \of f_n$.  Then the distinct factorizations of $i$
are $(ig_1) \of f_1, \ldots, (ig_n) \of f_n$.  But $\A$ itself has fine
M\"obius inversion over $\integers$, and
\[
\sum_{r = 1}^n \mu_\A(f_r) = \delta(1_a),
\qquad
\sum_{r = 1}^n \mu_\A(f_r) = \delta(i),
\]
so $\delta(i) = \delta(1_a) = 1 \in \integers$.  Hence $i$ is an
identity.

Now let $e\cln a \to a$ be an idempotent in $\A$.  As above, the subcategory
consisting of the object $a$ and the maps $1_a$ and $e$ can only have fine
M\"obius inversion over $\integers$ if $e = 1_a$.  \done
\end{proof}

Further characterizations of M\"obius categories can be found in~\cite{CLL,
LaMe, Lero}.

\appendix

\section{Zeros of the M\"obius function}
\label{app:zeros}

To extend the definition of coarse M\"obius inversion to categories with
infinitely many objects, we made essential use of Theorem~\ref{thm:zm-van},
the proof of which depended in turn on a fact about matrices:
Theorem~\ref{thm:zeros} below.  Our task here is to prove this.  

The same result was proved in the case $k = \rationals$ as Theorem~4.1
of~\cite{ECC}.  For arbitrary rigs, the proof is complicated by the need to
avoid subtraction.

Fix a rig $k$.  Write the $(i, j)$-entry of a matrix $X$ as $X_{ij}$.

\begin{defn}
An $n \times n$ matrix $Z$ over $k$ is \demph{transitive} if for all $p \geq
0$ and $i_1, \ldots, i_p \in \{1, \ldots, n\}$,
\[
Z_{i_0 i_p} = 0
\ \implies\ 
Z_{i_0 i_1} Z_{i_1 i_2} \cdots Z_{i_{p-1} i_p} = 0.
\]
\end{defn}
The case $p = 0$ states that $Z_{ii} = 0 \implies 1 = 0$; that is, if
$k$ is nontrivial then $Z_{ii} \neq 0$.

For an $n \times n$ matrix $X$ over $k$, write
\[
\detp X 
= 
\sum_{\sigma \in A_n} 
\prod_{r = 1}^n X_{r, \sigma(r)},
\qquad
\detm X 
= 
\sum_{\sigma \in S_n \without A_n} 
\prod_{r = 1}^n X_{r, \sigma(r)}.
\]
Thus, $\det X = \detp X - \detm X$.  Let $\adjp X$ and $\adjm X$ be the $n
\times n$ matrices with entries
\[
\adjp_{ij} X
=
\sum_{\sigma \in A_n \cln \sigma(j) = i} 
\ 
\prod_{r \neq j} X_{r, \sigma(r)},
\qquad
\adjm_{ij} X
=
\sum_{\sigma \in S_n \without A_n \cln \sigma(j) = i} 
\ 
\prod_{r \neq j} X_{r, \sigma(r)}.
\]
Thus, $\adjp X - \adjm X$ is the adjugate (classical adjoint) $\adj
X$. 

\begin{lemma}   \label{lemma:van-ids}
The following identities hold, for $n \times n$ matrices $X$ and $Y$ over $k$.
\begin{enumerate}
\item   \label{part:van-ids-id}
$\detp I = 1$ and $\detm I = 0$.

\item   \label{part:van-ids-det}
$
(\detp X)(\detp Y) + (\detm X)(\detm Y) + \detm(XY) 
=
$\\
$
(\detp X)(\detm Y) + (\detm X)(\detp Y) + \detp(XY).
$

\item   \label{part:van-ids-adj}
$
X(\adjp X) + (\detm X) I 
=
X(\adjm X) + (\detp X) I.
$
\end{enumerate}
\end{lemma}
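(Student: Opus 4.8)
The plan is to establish these three identities as refinements of the familiar negative-free versions of standard determinant facts, obtained by splitting each sum over the symmetric group $S_n$ according to the parity of the permutation. Since $k$ is merely a rig, the strategy throughout is to take each classical identity in which subtraction appears, move all negative terms to the opposite side so that only sums and products remain, and then verify the resulting balanced equation by a term-by-term bijective argument on permutations. Because $\det X = \detp X - \detm X$ and $\adj X = \adjp X - \adjm X$, each of the three statements is precisely the additive rearrangement of a standard ring-theoretic identity: (\ref{part:van-ids-id}) is the statement $\det I = 1$, (\ref{part:van-ids-det}) is the multiplicativity $\det(XY) = (\det X)(\det Y)$, and (\ref{part:van-ids-adj}) is the adjugate relation $X\,\adj X = (\det X) I$.

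First I would dispose of (\ref{part:van-ids-id}): the only permutation contributing a nonzero product $\prod_r I_{r,\sigma(r)}$ is the identity permutation, which is even, so $\detp I = 1$ and $\detm I = 0$ immediately.

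For (\ref{part:van-ids-det}), I would expand both $\det X$ and $\det Y$ into their even and odd parts, and expand $\det(XY)$ using the Cauchy--Binet-style computation
\[
\det(XY) = \sum_{\sigma \in S_n} \mr{sgn}(\sigma) \prod_{r} (XY)_{r,\sigma(r)}.
\]
The classical proof reorganizes the sum over $\sigma$ into a sum over pairs of permutations (or functions) with a parity bookkeeping via $\mr{sgn}(\sigma) = \mr{sgn}(\tau)\mr{sgn}(\rho)$. The key is that this bookkeeping is entirely about which terms carry a plus and which carry a minus sign; replacing signs by placement on one of the two sides of the equation turns the classical identity into exactly the balanced form asserted. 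I would check that on the left-hand side sit precisely the products whose total parity is odd (contributing the $\detm$ terms of the factorization plus the genuinely odd part of $\det(XY)$), and on the right sit the even ones, so the two sides agree term by term after the standard reindexing. The analogous argument handles (\ref{part:van-ids-adj}): expanding $(X \adjp X)_{ij} - (X \adjm X)_{ij}$ reproduces the Laplace expansion of $\det X$ along a column, equal to $(\det X)\delta_{ij}$, and rebalancing the signs yields the stated equation.

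The main obstacle I anticipate is purely combinatorial rather than conceptual: one must verify that the sign-to-side translation really is a \emph{bijection} of monomials between the two sides, with no term double-counted or omitted, and in particular that the diagonal ($i=j$) and off-diagonal ($i \neq j$) cases of (\ref{part:van-ids-adj}) are both covered correctly (the off-diagonal case being the vanishing of a determinant with a repeated column, which over a rig must again be proven by an explicit sign-reversing involution on $S_n$ rather than by cancellation). I would therefore spend most of the effort exhibiting, for each identity, the explicit parity-preserving and parity-reversing correspondences between the permutation-indexed monomials on the two sides, since it is exactly these involutions that encode the subtractions we are no longer permitted to perform.
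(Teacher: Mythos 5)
Your plan is correct in outline but takes a genuinely different, and much more laborious, route than the paper. The paper's proof contains essentially no combinatorics: it regards the classical facts $\det(XY) = (\det X)(\det Y)$ and $X(\adj X) = (\det X)I$ as identities in the ring of polynomials over $\integers$ in the $2n^2$ (resp.\ $n^2$) matrix entries; substituting $\det = \detp - \detm$ and $\adj = \adjp - \adjm$ and moving all negated terms across the equals sign produces exactly the balanced identities (\ref{part:van-ids-det}) and (\ref{part:van-ids-adj}), still as polynomial identities over $\integers$; and since both sides now have nonnegative coefficients, the identities already hold in the rig $\nat[x_{11},\ldots]$ of polynomials with natural-number coefficients, which embeds in $\integers[x_{11},\ldots]$. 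Because $\nat[x_{11},\ldots]$ is the free commutative rig on the variables, the identities are then preserved by substitution of arbitrary elements of an arbitrary rig $k$. This transfer argument delivers, for free, precisely the multiset equality of monomials that you propose to establish by explicit bijections. Your bijections do exist, and your identification of the two hard spots is accurate (the non-injective terms in the Cauchy expansion of $\det(XY)$, and the repeated-row vanishing behind the off-diagonal entries of (\ref{part:van-ids-adj})), so your route would succeed if carried out; but it amounts to re-proving by hand, over a rig, the classical ring identities that the paper simply cites. One bookkeeping slip to repair if you do pursue it: in (\ref{part:van-ids-det}) it is \emph{not} the case that odd-total-parity terms sit on one side and even ones on the other --- each side mixes parities. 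Expanding each term of $\detp(XY)$ and $\detm(XY)$ over pairs $(\sigma, f)$ with $\sigma$ a permutation and $f \cln \{1,\ldots,n\} \to \{1,\ldots,n\}$ a function, the matching is crossed: the even-parity products $(\detp X)(\detp Y) + (\detm X)(\detm Y)$ on the left correspond to the bijective-$f$ terms inside $\detp(XY)$ on the right, the odd-parity products on the right correspond to the bijective-$f$ terms inside $\detm(XY)$ on the left, and the non-injective-$f$ terms of $\detm(XY)$ and $\detp(XY)$ pair with each other across the equation via the involution composing $\sigma$ with a transposition of two indices at which $f$ collides.
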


\begin{proof}
Part~(\ref{part:van-ids-id}) is immediate.  For~(\ref{part:van-ids-det}),
first note that the general identity $\det(XY) = (\det X)(\det Y)$ can be
regarded as an identity in the ring of polynomials over $\integers$ in $2n^2$
variables.  Substituting $\det = \detp - \detm$ gives the equation shown,
which is again an identity in this polynomial ring.  But all coefficients are
nonnegative, so it is also an identity in the rig of polynomials over $\nat$ in
$2n^2$ variables.  The result follows.  Part~(\ref{part:van-ids-adj}) is
proved similarly, using the identity $X(\adj X) = (\det X)I$ and the fact
that $\adj = \adjp - \adjm$.  \done
\end{proof}

\begin{lemma}   \label{lemma:van-minor}
Let $Z$ be an invertible, transitive $n \times n$ matrix over $k$.  Suppose
that $Z_{1n} = 0$.  Then:
\begin{enumerate}
\item   \label{part:van-minor-det}
Both $(\detp Z)(Z^{-1})_{1n}$ and $(\detm Z)(Z^{-1})_{1n}$ have additive
inverses in $k$.

\item   \label{part:van-minor-adj}
$\adjp_{1n} Z = \adjm_{1n} Z = 0$.
\end{enumerate}
\end{lemma}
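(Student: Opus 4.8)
The plan is to prove part~(\ref{part:van-minor-adj}) first, by a direct combinatorial argument from transitivity, and then to feed it into part~(\ref{part:van-minor-det}).

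For part~(\ref{part:van-minor-adj}) I would unwind the definition
$\adjp_{1n} Z = \sum_{\sigma \in A_n,\ \sigma(n) = 1} \prod_{r \neq n} Z_{r, \sigma(r)}$
and kill each term separately. Fix such a $\sigma$ and trace the cycle of $\sigma$ containing $n$: since $\sigma(n) = 1$, this cycle reads $n \to 1 \to \sigma(1) \to \sigma^2(1) \to \cdots \to n$, so among the factors $Z_{r,\sigma(r)}$ with $r \neq n$ we find the consecutive block $Z_{1, \sigma(1)} Z_{\sigma(1), \sigma^2(1)} \cdots Z_{\sigma^{m-1}(1), n}$, a product along a directed path from $1$ to $n$. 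Because $Z_{1n} = 0$ and $Z$ is transitive, this block vanishes, so the whole term is $0$; hence $\adjp_{1n} Z = 0$, and the identical argument gives $\adjm_{1n} Z = 0$. This is the step I am most confident about.

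For part~(\ref{part:van-minor-det}) the first move is to left-multiply the adjugate identity Lemma~\ref{lemma:van-ids}(\ref{part:van-ids-adj}) by $Z^{-1}$, obtaining the matrix identity $\adjp Z + (\detm Z) Z^{-1} = \adjm Z + (\detp Z) Z^{-1}$ over $k$. Reading off the $(1,n)$ entry and substituting $\adjp_{1n} Z = \adjm_{1n} Z = 0$ from part~(\ref{part:van-minor-adj}) collapses this to the single equation $(\detp Z)(Z^{-1})_{1n} = (\detm Z)(Z^{-1})_{1n}$. This shows the two elements in question are equal, so it suffices to produce an additive inverse for one of them.

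The hard part will be exactly this last step. Over a rig, knowing $(\detp Z)(Z^{-1})_{1n} = (\detm Z)(Z^{-1})_{1n}$ does not permit cancellation, and indeed these formal data are consistent with a nonzero common value possessing no additive inverse; so the additive-inverse property cannot come from the adjugate identity alone. My intended remedy is to bring in the multiplicativity identity Lemma~\ref{lemma:van-ids}(\ref{part:van-ids-det}) for the pair $Z, Z^{-1}$, which encodes $(\det Z)(\det Z^{-1}) = 1$ and supplies the ring-like relation one needs to manufacture inverses. However, the purely formal combination of these two identities is circular (it only reproduces absorption relations of the form $q + (Z^{-1})_{1n} = q$), so I expect transitivity must genuinely re-enter here — most plausibly through an induction on $n$ that applies Theorem~\ref{thm:zeros} to proper minors of $Z$ (equivalently, to smaller patches), so that additive inverses for the relevant sub-products are already available. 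Identifying this inductive mechanism, rather than carrying out the formal manipulations, is where I expect the real difficulty to lie.
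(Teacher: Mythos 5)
Your part~(\ref{part:van-minor-adj}) is correct and is essentially the paper's own argument: tracing the cycle of $\sigma$ through $n$ exhibits, among the factors $Z_{r,\sigma(r)}$ ($r \neq n$), a product of entries along a path of distinct indices from $1$ to $n$, which vanishes by transitivity and $Z_{1n}=0$. Your reduction of part~(\ref{part:van-minor-det}) to the equation $(\detp Z)(Z^{-1})_{1n} = (\detm Z)(Z^{-1})_{1n}$ is also valid (it is precisely equation~\eqref{eq:van-detpm}, which in the paper is a step in the proof of Theorem~\ref{thm:zeros}, not of this lemma), but, as you yourself recognize, it cannot manufacture the required additive inverses. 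The genuine gap is that your proposed repair is circular: Theorem~\ref{thm:zeros} is \emph{deduced} from this lemma, so it cannot be invoked for minors of $Z$ here; moreover a proper principal submatrix of an invertible transitive matrix need not itself be invertible, so there is no obvious inductive hypothesis to run.

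The missing idea is elementary and term-by-term, with no induction and no use of Lemma~\ref{lemma:van-ids} at all. Expand $(\detp Z)(Z^{-1})_{1n} = \sum_{\sigma \in A_n} \bigl(\prod_{r=1}^n Z_{r,\sigma(r)}\bigr)(Z^{-1})_{1n}$; since a sum of elements admitting additive inverses again admits one, it suffices to handle each $\sigma$ separately. If $\sigma(n)=1$, the term is $0$ by the same path argument as in part~(\ref{part:van-minor-adj}), and $0$ is its own additive inverse. If $\sigma(n)\neq 1$, then $\sigma^{-1}(1)\neq n$, so the $(\sigma^{-1}(1),n)$ entry of $ZZ^{-1}=I$ gives
\[
\sum_{i=1}^n Z_{\sigma^{-1}(1),\,i}\,(Z^{-1})_{in} \;=\; I_{\sigma^{-1}(1),\,n} \;=\; 0,
\]
which exhibits $Z_{\sigma^{-1}(1),1}(Z^{-1})_{1n}$ as a summand of $0$; hence it has an additive inverse, namely the sum of the other summands. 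Since $Z_{\sigma^{-1}(1),1}$ is one of the factors of $\prod_r Z_{r,\sigma(r)}$, the whole term $\bigl(\prod_r Z_{r,\sigma(r)}\bigr)(Z^{-1})_{1n}$ is a multiple of $Z_{\sigma^{-1}(1),1}(Z^{-1})_{1n}$ and therefore also has an additive inverse. Summing over $\sigma \in A_n$ (resp.\ $\sigma \in S_n \without A_n$) yields part~(\ref{part:van-minor-det}) for $\detp$ (resp.\ $\detm$) directly; the equality you derived from the adjugate identity is then not needed in the lemma itself, only later in Theorem~\ref{thm:zeros}.
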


\begin{proof}
First I claim that if $\sigma \in S_n$ with $\sigma(n) = 1$ then $\prod_{r =
1}^{n - 1} Z_{r, \sigma(r)} = 0$.  To prove this, choose the least $p \geq 1$
such that $\sigma^p(1) = 1$.  We have $\sigma^{p - 1}(1) = n$, and the numbers
$1, \sigma(1), \ldots, \sigma^{p - 2}(1)$ are all distinct and less than $n$,
so
\[
Z_{1, \sigma(1)} Z_{\sigma(1), \sigma^2(1)} \cdots 
Z_{\sigma^{p-2}(1), n}
\divides
Z_{1, \sigma(1)} Z_{2, \sigma(2)} \cdots Z_{n-1, \sigma(n-1)}.
\]
But by transitivity and the hypothesis $Z_{1n} = 0$, the left-hand side is
$0$, so the right-hand side is also $0$, as claimed.

For~(\ref{part:van-minor-det}), it is enough to prove that $(\prod_{r = 1}^n
Z_{r, \sigma(r)})(Z^{-1})_{1n}$ has an additive inverse for each $\sigma \in
S_n$.  When $\sigma(n) = 1$, this follows from the claim.  Suppose, then, that
$\sigma(n) \neq 1$.  We have
\[
\sum_{i = 1}^n Z_{\sigma^{-1}(1), i} (Z^{-1})_{in}
=
I_{\sigma^{-1}(1), n}
=
0,
\]
so $Z_{\sigma^{-1}(1), 1} (Z^{-1})_{1n}$ has an additive inverse, and
the result follows.

Part~(\ref{part:van-minor-adj}) follows immediately from the claim.
\done
\end{proof}

\begin{thm}     \label{thm:zeros}
Let $Z$ be an invertible, transitive, $n \times n$ matrix over $k$.  Let $i, j
\in \{1, \ldots, n\}$.  Then
\[
Z_{ij} = 0
\implies
(Z^{-1})_{ij} = 0.
\]
\end{thm}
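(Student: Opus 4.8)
The plan is to imitate the classical Cramer's-rule proof---where one writes $(Z^{-1})_{ij}$ as a cofactor divided by $\det Z$ and observes that transitivity kills the cofactor---but to replace every division by a cancellation of additively invertible elements, since $k$ need not support subtraction. First I would dispose of the degenerate case: if $i = j$ and $Z_{ii} = 0$, the $p = 0$ clause of transitivity forces $1 = 0$, so $k$ is trivial and the conclusion is automatic. Assuming $i \neq j$, I would relabel indices by a permutation carrying $i \mapsto 1$ and $j \mapsto n$; conjugating $Z$ by the corresponding permutation matrix preserves both invertibility and transitivity (the latter being stated symmetrically over all index sequences) and sends $(Z^{-1})_{ij}$ to $(Z^{-1})_{1n}$. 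So it suffices to show $Z_{1n} = 0 \implies (Z^{-1})_{1n} = 0$. I abbreviate $x = (Z^{-1})_{1n}$, $d^+ = \detp Z$, $d^- = \detm Z$, $e^+ = \detp Z^{-1}$, $e^- = \detm Z^{-1}$. Lemma~\ref{lemma:van-minor} now applies: part~(ii) gives $\adjp_{1n} Z = \adjm_{1n} Z = 0$, and part~(i) gives that $d^+ x$ and $d^- x$ are additively invertible.

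The next step is to extract two key relations. Left-multiplying the adjugate identity of Lemma~\ref{lemma:van-ids}(iii) (applied to $X = Z$) by $Z^{-1}$ yields the subtraction-free Cramer identity $\adjp Z + d^-\, Z^{-1} = \adjm Z + d^+\, Z^{-1}$; reading off the $(1,n)$ entry and invoking $\adjp_{1n} Z = \adjm_{1n} Z = 0$ collapses it to $d^+ x = d^- x$. Separately, applying the multiplicativity identity of Lemma~\ref{lemma:van-ids}(ii) to $X = Z$, $Y = Z^{-1}$ (and using $\detp I = 1$, $\detm I = 0$) gives the genuine rig equation $d^+ e^+ + d^- e^- = d^+ e^- + d^- e^+ + 1$, which is exactly the subtraction-free form of $\det Z \cdot \det Z^{-1} = 1$.

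To finish, I set $A = d^+ e^+ + d^- e^-$ and $B = d^+ e^- + d^- e^+$, so that $A = B + 1$. Using $d^+ x = d^- x$ repeatedly, both $Ax$ and $Bx$ simplify to $(e^+ + e^-)(d^+ x)$, so $Ax = Bx$; on the other hand $Ax = (B+1)x = Bx + x$, whence $Bx + x = Bx$. Because $d^+ x$ is additively invertible, so is $Bx = (e^+ + e^-)(d^+ x)$, and adding its additive inverse to both sides gives $x = 0$, as required.

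I expect the main obstacle to be precisely this final cancellation: in a general rig the equation $Bx + x = Bx$ does \emph{not} force $x = 0$ (it fails, for instance, in $\nat \cup \{\infty\}$). The entire reason for tracking $\detp, \detm$ and $\adjp, \adjm$ separately, rather than their differences, is to secure---through Lemma~\ref{lemma:van-minor}(i)---that the element $Bx$ being cancelled is additively invertible, which is exactly what legitimizes the otherwise-forbidden subtraction at the end.
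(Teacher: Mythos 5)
Your proposal is correct and follows essentially the same route as the paper's own proof: the same reduction to the $(1,n)$ entry, the same two identities from Lemma~\ref{lemma:van-ids} combined with Lemma~\ref{lemma:van-minor}, and the same final cancellation of an additively invertible element $\lambda = Bx$ in the equation $\lambda = \lambda + x$. The only differences are expository---you spell out the permutation argument behind the paper's ``without loss of generality'' and the algebra showing $Ax = Bx$, both of which the paper leaves implicit.
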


\begin{proof}
If $i = j$ then $Z_{ii} = 0$, so by transitivity, $k$ is trivial and the
result holds.  So we may suppose without loss of generality that $i = 1$ and
$j = n$.

Applying Lemma~\ref{lemma:van-ids}(\ref{part:van-ids-adj}) to $Z$, then
premultiplying by $Z^{-1}$, we have
\[
\adjp Z + (\detm Z) Z^{-1} 
=
\adjm Z + (\detp Z) Z^{-1}.
\]
Now taking the $(1, n)$ entries on each side and using
Lemma~\ref{lemma:van-minor}(\ref{part:van-minor-adj}), we have
\begin{equation}        \label{eq:van-detpm}
(\detm Z)(Z^{-1})_{1n} 
=
(\detp Z)(Z^{-1})_{1n}.
\end{equation}
On the other hand, we may take $X = Z$ and $Y = Z^{-1}$ in
Lemma~\ref{lemma:van-ids}(\ref{part:van-ids-det}), which, with the aid of
Lemma~\ref{lemma:van-ids}(\ref{part:van-ids-id}), gives
\begin{equation}        \label{eq:van-big}
(\detp Z)(\detp Z^{-1}) + (\detm Z)(\detm Z^{-1}) 
=
(\detp Z)(\detm Z^{-1}) + (\detm Z)(\detp Z^{-1}) + 1.
\end{equation}
Multiply~\eqref{eq:van-big} by $(Z^{-1})_{1n}$ on each side.
By~\eqref{eq:van-detpm}, the result is an equation of the form $\lambda =
\lambda + (Z^{-1})_{1n}$, where, by
Lemma~\ref{lemma:van-minor}(\ref{part:van-minor-det}), $\lambda \in k$ has an
additive inverse.  Hence $(Z^{-1})_{1n} = 0$.  \done
\end{proof}

\section{Pullback-homomorphisms}
\label{app:pbh}

Here we place the notion of ULF functor into an abstract context.  In doing
so, we discover a new analogy between ULF functors and local homeomorphisms.

\begin{defn}
Let $\T = (T, \eta, \mu)$ be a monad on a category $\E$.  A homomorphism 
\begin{equation}
\label{eq:alg-homm}
\begin{array}{c}
\xymatrix{
TA \ar[r]^{Tf} \ar[d]   &TB \ar[d]     \\
A \ar[r]^f              &B
}
\end{array}
\end{equation}
of $\T$-algebras is a \demph{pullback-homomorphism} if the
square~\eqref{eq:alg-homm} is a pullback.
\end{defn}

\begin{propn}
Let $\T$ be the free category monad on the category of directed graphs.  Then
the pullback-homomorphisms of $\T$-algebras are precisely the ULF functors.
\end{propn}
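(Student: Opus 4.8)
The plan is to identify the $\T$-algebras with categories and their homomorphisms with functors, and then to read the pullback condition off explicitly. For a directed graph, $T$ produces its free category, whose vertices are unchanged and whose edges are the finite (possibly empty) paths of composable edges; the structure map $\theta_\A\cln T\A \to \A$ of an algebra $\A$ is the operation sending such a path to its composite, the empty path at an object $x$ going to $1_x$. The square~\eqref{eq:alg-homm} for a functor $f\cln \A \to \B$ commutes precisely because $f$ preserves composites and identities, so all the content lies in whether it is a \emph{pullback}. Since pullbacks of directed graphs are formed separately on vertices and on edges, and $T$ is the identity on vertices, the comparison map $\phi\cln T\A \to \A \times_\B T\B$ is automatically bijective on vertices; the square is a pullback if and only if $\phi$ is a bijection on edges.

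First I would unwind what an edge of $\A \times_\B T\B$ is: a pair $(g, p)$ consisting of a morphism $g$ of $\A$ and a path $p = (p_1, \ldots, p_n)$ in $\B$ whose composite $p_n \of \cdots \of p_1$ equals $fg$. The map $\phi$ sends a path $q = (q_1, \ldots, q_n)$ in $\A$ to $(q_n \of \cdots \of q_1, (fq_1, \ldots, fq_n))$, and in particular preserves length. Hence $\phi$ is a bijection on edges exactly when: for every morphism $g$ of $\A$ and every expression of $fg$ as the composite of a path $(p_1, \ldots, p_n)$ in $\B$, there is a unique path $(q_1, \ldots, q_n)$ in $\A$, of the same length, with composite $g$ and $fq_i = p_i$ for each $i$. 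This is unique lifting of factorizations into arbitrarily many parts.

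It then remains to show that this all-lengths condition is equivalent to the two-part condition defining ULF. One direction is immediate, by taking $n = 2$. For the converse I would induct on $n$: the length-$1$ case is trivial, and the step from $n - 1$ to $n$ (for $n \ge 2$) is routine, factoring $fg$ as $p_n \of (p_{n-1} \of \cdots \of p_1)$, applying two-part ULF to split $g$ as $q_n \of g'$ with $fg' = p_{n-1} \of \cdots \of p_1$, and applying the inductive hypothesis to $g'$. The one genuinely delicate case is $n = 0$: here $p$ is an empty path at some object $y$, and the condition demands that every $g$ with $fg = 1_y$ be an identity (uniquely lifted by an empty path). I expect this to be the main obstacle, and I would handle it by first proving that any ULF functor reflects identities: if $g\cln x \to x'$ satisfies $fg = 1_y$, then applying two-part ULF to the factorization $1_y = 1_y \of 1_y$ shows that both $g = g \of 1_x$ and $g = 1_{x'} \of g$ are liftings, so uniqueness forces $g = 1_x$. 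With identity-reflection established, the $n = 0$ case follows, completing the induction and hence the equivalence.
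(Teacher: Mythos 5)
Your proof is correct and follows essentially the same route as the paper's: both reduce the pullback condition to the vertex square (automatic) plus path-length-indexed edge squares, identify the $n = 2$ case with the ULF property, and dispose of general $n$ by induction, with the $n = 0$ case resting on the fact that ULF functors reflect identities. The only differences are cosmetic---you argue element-wise via the comparison map where the paper invokes pointwise limits in the presheaf category of graphs---and you additionally write out the identity-reflection argument, which the paper asserts without proof.
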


\begin{proof}
Let $F\cln \A \to \B$ be a functor between small categories, regarded as a
homomorphism of $\T$-algebras.  Write $\A_n$ for the set of paths $a_0
\toby{f_1} \cdots \toby{f_n} a_n$ in $\A$, and similarly $\B_n$.  Since limits
in a presheaf category are computed pointwise, $F$ is a pullback-homomorphism
if and only if the squares
\[
\xymatrix{
\A_0 \ar[r] \ar[d]_1      &\B_0 \ar[d]^1    \\
\A_0 \ar[r]               &\B_0
}
\qquad\qquad
\xymatrix{
\sum_{n \in \nat} \A_n \ar[r] \ar[d]_{\circ}    &
\sum_{n \in \nat} \B_n \ar[d]^{\circ}   \\
\A_1 \ar[r]     &\B_1
}
\]
are pullbacks in $\Set$.  (Here $\sum$ denotes coproduct.)  The left-hand
square certainly is, and the right-hand square is a pullback if and only if
\[
\xymatrix{
\A_n \ar[r] \ar[d]_\circ    &\B_n \ar[d]^\circ  \\
\A_1 \ar[r]     &\B_1
}
\]
is a pullback for each $n \in \nat$.  This reduces by induction to the cases
$n = 0$ and $n = 2$.  For the $n = 2$ square to be a pullback is precisely the
ULF property.  The $n = 0$ square is a pullback if and only if $F$ reflects
identities; but this is always true if $F$ is ULF.
\done
\end{proof}

Pullback-homomorphisms have a three-for-two property: given homomorphisms
$\obdot \toby{f} \obdot \toby{g} \obdot$ with $g$ a pullback-homomorphism, $g
\of f$ is a pullback-homomorphism if and only if $f$ is.  This follows from
the elementary properties of pullbacks, and applies in particular to ULF
functors. 

Here are the pullback-homomorphisms for some other monads.  Proofs are
omitted. 

\begin{examples}        \label{egs:pbh}
\begin{enumerate}
\item   \label{eg:pbh-gset}
Fix a group $G$ and consider the monad $G \times \dashbk$ on
$\Set$, whose algebras are $G$-sets.  Then every map of $G$-sets is a
pullback-homomorphism.  

\item \label{eg:pbh-gp} At the other extreme, when $\T$ is the free group
monad on $\Set$, the only pullback-homomorphisms are the isomorphisms.

\item   \label{eg:pbh-ptd}
Take the monad $1 + \dashbk$ on $\Set$, adjoining to each set a new element.
Its category of algebras is equivalent to the category of sets and partial
functions.  The pullback-homomorphisms are the \emph{total}
functions.  

\item \label{eg:pbh-lat} Let $\pset$ be the powerset monad on $\Set$.  Its
algebras are the complete lattices; the homomorphisms are the maps preserving
joins.  Among them, the pullback-homomorphisms are the injections whose images
are downwards closed.

\item   \label{eg:pbh-cart}
Let $\A$ be a small category.  The forgetful functor $\Set^\A \to
\Set^{\obc{\A}}$ is monadic.  So, writing $\T$ for the induced monad, the
homomorphisms of $\T$-algebras are the natural transformations between
functors $\A \to \Set$.  The pullback-homomorphisms are the cartesian
natural transformations: those for which every naturality square is a
pullback.  
\end{enumerate}
\end{examples}

The unwirable maps of Bowler~\cite{Bowl} provide further examples.

We have observed that the class of pullback-homomorphisms is closed under
composition.  For a general monad $\T$, it is not stable under pullback
(Example~\ref{eg:cpt-Hff-stable}); that is, the pullback of a
pullback-homomorphism along an arbitrary homomorphism need not be a
pullback-homomorphism.  However:

\begin{propn}   \label{propn:cart-implies-stable}
Let $\E$ be a category with pullbacks and $\T$ a monad on $\E$ whose functor
part preserves pullbacks.  Then the class of pullback-homomorphisms of
$\T$-algebras is stable under pullback along arbitrary homomorphisms.
\end{propn}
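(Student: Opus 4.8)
The plan is to realize the situation as a commutative cube and to verify directly the universal property of the relevant face. First I would recall that the forgetful functor $U\cln \E^\T \to \E$ creates limits, so the pullback in $\E^\T$ of a pullback-homomorphism $g\cln (B,b) \to (C,c)$ along an arbitrary homomorphism $h\cln (D,d) \to (C,c)$ is computed on underlying objects: the underlying object is the pullback $P = D \times_C B$ in $\E$, with projections $p\cln P \to D$ and $q\cln P \to B$, and its algebra structure $e\cln TP \to P$ is the unique map satisfying $p \of e = d \of Tp$ and $q \of e = b \of Tq$. The goal is to show that the projection $p$ is a pullback-homomorphism, i.e.\ that the square with edges $e$, $Tp$, $d$, $p$ is a pullback in $\E$.

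I would organize the maps into a cube whose bottom face is the defining pullback $P = D \times_C B$ and whose vertical edges are the four structure maps $e, b, d, c$. Three of its faces are already pullbacks: the bottom face by construction; the top face, obtained by applying $T$ to the bottom face, because $T$ preserves pullbacks, so $TP \iso TD \times_{TC} TB$ with projections $Tp, Tq$; and the right-hand face, the homomorphism square of $g$, because $g$ is a pullback-homomorphism, so $TB \iso B \times_C TC$. What must be shown is that the back face, the homomorphism square of $p$, is a pullback.

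To prove this I would check the universal property by hand. Given a test object $R$ with maps $u\cln R \to TD$ and $v\cln R \to P$ satisfying $d \of u = p \of v$, I would first build a map $R \to TB$ out of $q \of v\cln R \to B$ and $Th \of u\cln R \to TC$, using the pullback $TB \iso B \times_C TC$; the required compatibility $g \of (q\of v) = c \of (Th \of u)$ follows from the homomorphism condition $c \of Th = h \of d$ for $h$, together with $d\of u = p \of v$ and the identity $h \of p = g \of q$ coming from the bottom face. Combining this map with $u$ via the pullback $TP \iso TD \times_{TC} TB$ then yields the mediating map $w\cln R \to TP$, and I would verify $Tp \of w = u$ and $e \of w = v$ by testing against the projections $p, q$ out of $P$. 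Uniqueness of $w$ would follow the same way, reading off $Tq \of w$ from the relation $e \of w = v$ and invoking the uniqueness clauses of the two pullbacks $TB \iso B \times_C TC$ and $TP \iso TD \times_{TC} TB$.

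The step I expect to be the main obstacle is the bookkeeping in the third paragraph: checking that the candidate maps really do agree over $C$ and over $TC$, so that they factor through the respective pullbacks, and then confirming that the resulting $w$ restricts correctly along both $Tp$ and $e$. All of this is forced, but it is exactly here that the two hypotheses---$T$ preserving pullbacks and $g$ being a pullback-homomorphism---together with the homomorphism condition on $h$ get used, so some care is needed to invoke each one in the right place.
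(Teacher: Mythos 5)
Your proposal is correct and matches the paper's approach: the paper's entire proof is the single line ``Elementary manipulation of pullbacks,'' and your cube setup---pullback created by the forgetful functor, top face a pullback since $T$ preserves pullbacks, the homomorphism square of $g$ a pullback by hypothesis, followed by the direct universal-property check---is precisely that manipulation spelled out. All the compatibility conditions you flag as the main obstacle do go through (e.g.\ $c \of Th \of u = h \of d \of u = h \of p \of v = g \of q \of v$ for factoring through $TB \iso B \times_C TC$), so there is no gap.
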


\begin{proof}
Elementary manipulation of pullbacks.
\done
\end{proof}

Since the free category monad on directed graphs preserves pullbacks, the
class of ULF functors is stable under pullback.
Proposition~\ref{propn:cart-implies-stable} also implies that the class of
pullback-homomorphisms is stable under pullback in
Examples~\ref{egs:pbh}(\ref{eg:pbh-gset}), (\ref{eg:pbh-ptd}),
(\ref{eg:pbh-cart}).  Furthermore, the same is true in
Examples~\ref{egs:pbh}(\ref{eg:pbh-gp}) and (\ref{eg:pbh-lat}), not by the
proposition but by the explicit description of pullback-homomorphisms given
there.  This covers all of our examples so far.

It is now useful to extend the terminology.

\begin{defn}
Let $\E$ be a category with pullbacks and $\T$ a monad on $\E$.  A
homomorphism $f\cln (A, \alpha) \to (B, \beta)$ of $\T$-algebras is a
\demph{stable pullback-homomorphism} if for every homomorphism $g\cln (C,
\gamma) \to (B, \beta)$ of $\T$-algebras, the pullback of $f$ along $g$ is a
pullback-homomorphism.
\end{defn}

Thus, the class $\cat{S}$ of stable pullback-homomorphisms is the largest
subclass of the pullback-homomorphisms that is stable under pullback along
arbitrary homomorphisms.  In all of our examples so far, every
pullback-homomorphism is stably so.

We finish with a suggestive example in which pullback-homomorphisms are
not stable under pullback.  I thank Mike Shulman for pointing it out. 

\begin{example} \label{eg:cpt-Hff-stable}
Let $\T$ be the ultrafilter monad on $\Set$, whose algebras are the compact
Hausdorff spaces.  It is shown in~\cite{CHJ} that not every
pullback-homomorphism of $\T$-algebras is stably so.  It is also shown that
the stable pullback-homomorphisms are precisely the local homeomorphisms.
\end{example}

According to Lawvere and Menni, `The definition of ULF-functor should be
compared with that of local homeomorphism' (\cite{LaMe}, p.230).  We now
have a general concept, stable pullback-homomorphism, of which both ULF
functors and local homeomorphisms (between compact Hausdorff spaces) are
special cases.  A further possibility, suggested by Joachim Kock, is that
there might also be a connection via the axiomatic notion of \'etale
map~\cite{JoMe}.

\ucontents{section}{References}

\end{document}